\newtheorem{Theo}{Theorem}
\newtheorem{Prop}{Proposition}
\newtheorem{Lemma}[Prop]{Lemma}
\theoremstyle{definition}
\newtheorem{Def}[Prop]{Definition}
\title{On the graded automorphisms of upper triangular matrix algebras}
\author{Felipe Y. Yasumura\thanks{Supported by PhD grant number 2013/22.802-1, from FAPESP}}
\date{September, 2017}
\begin{document}
\maketitle
\begin{abstract}
	We compute the graded automorphisms of the upper triangular matrices, viewed as associative, Lie and Jordan algebras. We compute also the so called self-equivalences and Weyl and diagonal groups for every grading.
\end{abstract}

\section{Introduction}

It is well known that every automorphism of a central simple associative algebra is inner. The same statement was proved to be true for the associative algebra of upper triangular matrices \cite{J1987,J1995}. Similar questions can be raised for algebras with additional structure, for example, in the context of graded algebras.

Recently, graded algebras are a subject of intense investigation, because its naturality in Physics and Mathematics. Polynomial algebras (in one or more commutative variables) are the most natural structure of an algebra with a grading - given by the usual degree of polynomials. For instance, the classification of finite dimensional semisimple Lie algebras gives rise to naturally $\mathbb{Z}^m$-graded algebras (see \cite{SanMartin,Jac1979}). Kemer solved a very difficult problem known as the Specht property in the theory of algebras with Polynomial Identities (PI-algebras, for short), in the setting of associative algebras over fields of characteristic zero, using $\mathbb{Z}_2$-graded algebras as a tool \cite{Kemer}. After the works of Kemer, the interest for graded algebras greatly raised.

In this short note, we shall investigate the structure of automorphisms, in the graded sense, for the algebra of upper triangular matrices. We will compute the graded automorphisms and the Weyl and diagonal groups of the upper triangular matrix algebras, as associative, Lie and Jordan algebras. We cite also the graded involutions on the associative case, computed firstly in \cite{VaZa2009}.

We denote by $G$ a group with multiplicative notation and neutral element $1$. Everything will be over an arbitrary field $K$.

We recall that an algebra $A$ is called $G$-graded (or $A$ is endowed with a $G$-grading) if there exists a vector space decomposition $A=\oplus_{g\in G}A_g$, where some $A_g$ can be null, such that $A_gA_h\subset A_{gh}$, for all $g,h\in G$. Sometimes we denote $\Gamma:A=\oplus_{g\in G}A_g$ to name $\Gamma$ the corresponding $G$-grading. A non-zero element $x\in A_g$ is called \textit{homogeneous} of $G$-degree $g$, and we denote $G\text{-}\deg x = g$ (or simple $\deg x=g$ whenever there is no ambiguity). We denote $\text{Supp}\,\Gamma=\{g\in G\mid A_g\ne0\}$ (or, if the grading over $A$ is clear, $\text{Supp}\,A$). If $B=\oplus_{g\in G}B_g$ is another $G$-graded algebra, we say that a linear map $f:A\to B$ is a \textit{homomorphism of graded algebras} if $f$ is an homomorphism of algebras and $f(A_g)\subset B_g$ for all $g\in G$. Similarly we define the notion of isomorphism of graded algebras (or graded isomorphism).

The notion of isomorphism of graded algebras is very natural. But we can also relate algebras graded by different groups, if we extent this definition. We introduce some of these extended notions. According to the book of Elduque and Kochetov \cite{EldKoc}, we consider the following sets, concerning the graded algebra $A$:
\begin{Def}[see \cite{EldKoc}]
	Let $\Gamma:A=\oplus_{g\in G}A_g$ be $G$-graded.
	\begin{enumerate}
		\renewcommand{\labelenumi}{(\roman{enumi})}
		\item $\text{Aut}(\Gamma)$ denotes all self-equivalences of $\Gamma$, that is, all isomorphism of algebras $f:A\to A$ such that for all $g\in\text{Supp}\,\Gamma$, there exists $h=\alpha(g)\in\text{Supp}\,\Gamma$ satisfying $f(A_g)=A_{h}$.
		\item $\text{Stab}(\Gamma)=\text{Aut}_G(A)$ is the set of all automorphisms of graded algebras of $A$.
		\item $\text{Diag}(\Gamma)$ is the set of all $f\in\text{Stab}(\Gamma)$ such that for all $g\in\text{Supp}\,\Gamma$, there exists $\lambda_g\in K$ such that $f(a)=\lambda_g a$, for all $a\in A_g$.
		\item $W(\Gamma)=\text{Aut}(\Gamma)/\text{Stab}(\Gamma)$ is the Weyl group of $\Gamma$.
	\end{enumerate}
\end{Def}
We remark that $\text{Stab}(\Gamma)$ is a normal subgroup of $\text{Aut}(\Gamma)$; indeed, every $f\in\text{Aut}(\Gamma)$ induces $\alpha\in\text{Sym}(\text{Supp}\,\Gamma)$ (the group of permutations of the set $\text{Supp}\,\Gamma$), in such a way $f(A_g)=A_{\alpha(g)}$. This defines a group homomorphism $f\in\text{Aut}(\Gamma)\to\alpha\in\text{Sym}(\text{Supp}\,\Gamma)$. Clearly $\text{Stab}(\Gamma)$ is exactly its kernel, hence $\text{Stab}(\Gamma)$ is indeed normal in $\text{Aut}(\Gamma)$. As consequence, $W(\Gamma)$ is actually a group.

It is well known that all these groups are related to the action of the Universal group grading (see bellow) on the algebra (see \cite{EldKoc}). We explicitly exhibits all these groups for the upper triangular matrices. But first, we briefly recall all possible group gradings over the upper triangular matrices, as associative, Lie and Jordan algebras (see \cite{VaZa2007,VinKoVa2004,pkfy1,pkfy2}, or the survey \cite{HKY}).\\[0cm]

\noindent\textbf{Group gradings on upper triangular matrices.} We denote by $UT_n$, $UT_n^{(-)}$ and $\text{UJ}_n$ the set of upper triangular matrices viewed as associative, Lie and Jordan algebra, respectively. The structure of associative algebra $UT_n$ is the usual product of upper triangular matrices; for the Lie algebra $UT_n^{(-)}$, we consider the bracket $[a,b]=ab-ba$; as Jordan algebra $\text{UJ}_n$, we take the product being the Jordan product $a\circ b=ab+ba$.

Let $e_{ij}$ be the matrix units, that is, $e_{ij}$ has value $1$ in the entry $(i,j)$ and $0$ in the other entries. From here on, $\ast$ can be either the associative, Lie or Jordan product on $UT_n$. We say that a grading on $(UT_n,\ast)$ is \textit{elementary} if all matrix units $e_{ij}$ are homogeneous in the grading. There are another two useful equivalents ways to work with elementary gradings.

The first equivalence is as follows. Every sequence $\eta=(g_1,g_2,\ldots,g_{n-1})\in G^{n-1}$ defines an elementary $G$-grading on the associative algebra $UT_n$ if we put $\deg e_{i,i+1}=g_i$ and $\deg e_{ii}=1$. Every elementary grading is given by such construction. If, moreover $G$ is commutative, this construction also defines an elementary grading on $\text{UJ}_n$ (or in $UT_n^{(-)}$). We denote this construction by $(UT_n,\eta)$ (or $(UT_n^{(-)},\eta)$, or $(\text{UJ}_n,\eta)$). Define the \textit{reverse sequence} of $\eta$ by $\text{rev}\,\eta=(g_{n-1},\ldots,g_2,g_1)$. We call an elementary grading \textit{symmetric} if $\eta=\text{rev}\,\eta$.

We describe another useful way of defining elementary grading. A $G$-grading on $UT_n$ is elementary if and only if there exists a sequence $a=(a_1,a_2,\ldots,a_n)\in G^n$ such that every $e_{ij}$ is homogeneous of degree $a_ia_j^{-1}$. We can always take $a_1=1$ (see \cite{VinKoVa2004}). For the Lie and Jordan case, we must have $[a_i,a_j]=1$, for all $i,j$.

Every group grading on $UT_n$ (as associative algebra) is graded isomorphic to an elementary one \cite{VaZa2007}. We mention also that in \cite{VinKoVa2004} the authors classified all elementary gradings on $UT_n$, and, for each elementary grading, they computed a basis for its graded polynomial identities.

Denote now $\odot$ the Lie bracket or the Jordan product on $UT_n$. We present a family of gradings on $(UT_n,\odot)$. Recall that $UT_n$ has an involution, given by $t:UT_n\to UT_n$ where $t(e_{ij})=e_{n-j+1,n-i+1}$; that is, $t$ satisfies $t^2=1$ and $t(ab)=t(b)t(a)$, for all $a,b\in UT_n$. If $H$ is an abelian group and $UT_n$ is equipped with an elementary $H$-grading where $\deg t(x)=\deg x$, for all homogeneous $x$, then we obtain a $G=\mathbb{Z}_2\times H$-grading on $(UT_n,\odot)$ if we consider the decomposition of $UT_n$ into homogeneous symmetric elements, and homogeneous skew-symmetric elements; this construction is an example of the family of gradings we define in what follows. Let $e_{i:m}=e_{i,i+m}$, $e_{-i:m}=e_{n-i+1-m,n-i+1}$ and $X_{i:m}^\pm=e_{i:m}\pm e_{-i:m}$. We say that a $G$-grading on $(UT_n,\odot)$ is \textit{MT} if all $X_{i:m}^\pm$ are homogeneous and $\deg X_{i:m}^+\ne\deg X_{i:m}^-$.

For the Jordan case, every group grading on $\text{UJ}_n$ is, up to a graded isomorphism, elementary or MT \cite{pkfy2}. Moreover, the support of any group grading on $\text{UJ}_n$ is commutative.

For the Lie case, the situation is a little bit more delicate. The center of $UT_n^{(-)}$, being non-trivial, can give rise to practically same gradings, but non-graded isomorphic. We introduce a notion to deal with the center (see, for instance, \cite{pkfy1}). Let $L$ be a Lie algebra and consider two $G$-gradings on $L$, say $L_1$ and $L_2$. We say that the gradings on $L_1$ and $L_2$ are \textit{practically the same} if the equality of graded algebras $L_1/\mathfrak{z}(L_1)=L_2/\mathfrak{z}(L_2)$ holds (we remark the easy assertion that the center of a graded Lie algebra is always a graded ideal), denoted $L_1\stackrel{G}= L_2$. If $L_1$ and $L_2$ are any $G$-graded Lie algebras, we say that $L_1$ is \textit{practically graded isomorphic} to $L_2$ if there exists a $G$-graded Lie algebra $L$ such that $L_1\simeq L$ (as graded algebras) and $L\stackrel{G}=L_2$; notation $L_1\stackrel{G}\sim L_2$. It is standard to prove that $\stackrel{G}\sim$ is an equivalence relation.

In \cite{pkfy1}, the authors prove that every group grading on the Lie algebra $UT_n^{(-)}$ is practically graded isomorphic to an elementary or MT one. It is also possible to prove that the support of an elementary or MT grading on $UT_n^{(-)}$ is always commutative. The unique possibility of obtaining non-commutative support for a grading on $UT_n^{(-)}$ is choosing adequate element to be the degree of the identity matrix; in other words, for any $G$-grading on $UT_n^{(-)}$, $\text{Supp}\,UT_n^{(-)}/\mathfrak{z}(UT_n^{(-)})$ is always commutative. Hence, we shall assume the grading group commutative when we deal with the Lie and Jordan cases.\\[0cm]

\noindent\textbf{Ordinary automorphism groups.} We will need also the well known group of automorphism of the upper triangular matrices as associative, Lie and Jordan algebras, in the ordinary sense. We review these groups.

Every automorphism of $UT_n$ is inner \cite{J1987,J1995}, that is, if $G_0=\{\varphi_a\mid a\in UT_n\text{ is invertible}\}$, where $\varphi_a:x\in UT_n\mapsto axa^{-1}\in UT_n$, then $\text{Aut}(UT_n)=G_0$. Moreover, after \cite{BBC}, every automorphism of $\text{UJ}_n$ is given by either an automorphism or antiautomorphism of $UT_n$. Hence $\text{Aut}(\text{UJ}_n)\simeq\langle t\rangle\rtimes G_0$ (where $t$ is the involution of $UT_n$, given by $t(e_{ij})=e_{n-j+1,n-i+1}$).

For the Lie case $UT_n^{(-)}$, we introduce an extra notation. Let $\omega=-t$, the minus flip along the second diagonal; it is easy to see that $\omega$ is an automorphism for $UT_n^{(-)}$. Denote by $S=\{(a_1,a_2,\ldots,a_n)\in K\mid a_1+a_2+\cdots+a_n+1\ne0\}$. For every $s=(a_1,\ldots,a_n)\in S$, let $\psi_s:UT_n^{(-)}\to UT_n^{(-)}$ be the map defined by $\psi(e_{ij})=e_{ij}+\delta_{ij}a_i1$, where $1$ is the identity matrix. Then $\psi_s$ is an automorphism of $UT_n^{(-)}$ \cite{Do1994}. Denote by $G_1=\{\psi_s\mid s\in S\}$. As proved in \cite{Do1994}, $G_0$ and $G_1$ commutes elementwise, $\omega$ normalizes $G_0\times G_1$ and $\text{Aut}(UT_n^{(-)})\simeq\langle\omega\rangle\rtimes(G_0\times G_1)$, if $n\ge3$; and $\text{Aut}(UT_2^{(-)})\simeq G_0\times G_1$.

\section{Associative case}

We will start computing the graded automorphisms for the associative case $UT_n$. Fix an elementary $G$-grading $\Gamma$ on $UT_n$. It is elementary to prove that $\deg e_{ii}=1$, for all $i$.

Let $a=(a_{ij})_{(i,j)}\in UT_n$ be an invertible matrix and write $a^{-1}=(b_{ij})_{(i,j)}$ its inverse. Denote by $\varphi_a:x\in UT_n\mapsto axa^{-1}\in UT_n$ the inner automorphism. Then, for all $i\le j$, we have
\begin{equation}\label{main_equation}
	ae_{ij}a^{-1}=\sum_{l,m}a_{li}b_{jm}e_{lm}.
\end{equation}
In particular, in order to have $\varphi_a$ a graded isomorphism, we necessarily have the following condition. If $\deg e_{ij}\ne\deg e_{lm}$, then necessarily $a_{li}b_{jm}=0$. In particular, if $\deg e_{lm}\ne1$, then $a_{lm}=0$. These computations forces $G\text{-}\deg a=1$. On the other hand, if $a$ is homogeneous matrix in the $G$-grading, then $axa^{-1}$ is a homogeneous element in the grading for any homogeneous $x\in UT_n$. Denote $$H_1^\Gamma=\{\varphi_a\mid a\in UT_n\textit{ invertible and homogeneous of degree $1$}\}.$$ We proved
\begin{Prop}\label{ref_to_def}
	Let $UT_n$ be endowed with any $G$-grading. Then
	$$
		\text{Aut}_G(UT_n,G)\simeq H_1^\Gamma.
	$$
\end{Prop}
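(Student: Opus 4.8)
The plan is to show the two inclusions that make up the isomorphism $\mathrm{Aut}_G(UT_n)\simeq H_1^\Gamma$. Since the grading $\Gamma$ is (up to graded isomorphism) elementary, I may assume $\deg e_{ii}=1$ for all $i$, as already observed. First I would establish the easy inclusion $H_1^\Gamma\subseteq\mathrm{Aut}_G(UT_n)$: if $a$ is invertible and homogeneous of degree $1$, then for any homogeneous $x\in (UT_n)_g$ one has $axa^{-1}\in(UT_n)_{1\cdot g\cdot 1}=(UT_n)_g$, because $\deg a=\deg a^{-1}=1$ (the inverse of a homogeneous unit of degree $1$ is again homogeneous of degree $1$, since $aa^{-1}=1\in(UT_n)_1$). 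Hence $\varphi_a$ preserves the grading, so $\varphi_a\in\mathrm{Aut}_G(UT_n)$. Combined with the classical fact that every automorphism of $UT_n$ is inner (\cite{J1987,J1995}), this shows $H_1^\Gamma$ is a subgroup of $\mathrm{Aut}_G(UT_n)$, and the map $a\mapsto\varphi_a$ is a surjective group homomorphism onto $H_1^\Gamma$.

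For the reverse inclusion, let $f\in\mathrm{Aut}_G(UT_n)$. By Jantzen's theorem $f=\varphi_a$ for some invertible $a=(a_{ij})\in UT_n$; the task is to show $a$ may be chosen homogeneous of degree $1$. Here is where equation \eqref{main_equation} does the work: since $f(e_{ij})=\sum_{l,m}a_{li}b_{jm}e_{lm}$ must lie in $(UT_n)_{\deg e_{ij}}$, every matrix unit $e_{lm}$ appearing with nonzero coefficient satisfies $\deg e_{lm}=\deg e_{ij}$. Applying this with $i=j$ gives, as noted in the excerpt, that $a_{lm}=0$ whenever $\deg e_{lm}\neq 1$, i.e. $a$ is supported on the homogeneous component of degree $1$ (so $a\in(UT_n)_1$, once we check $a$ remains invertible — it does, being the image under the block-diagonal-type projection that $\varphi_a$-conjugation is insensitive to; more carefully, $\varphi_a=\varphi_{a'}$ where $a'$ is the degree-$1$ "part" of $a$ after absorbing the degree-$0$… — I will verify directly that the degree-$1$ homogeneous component of $a$ is itself invertible using that $\deg e_{ii}=1$ forces the diagonal entries $a_{ii}$ to survive). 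Thus $f=\varphi_a$ with $a$ homogeneous of degree $1$, giving $f\in H_1^\Gamma$.

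Finally I would assemble these into the isomorphism statement. The map $\Phi:\{a\in UT_n\text{ invertible, homogeneous of degree }1\}\to\mathrm{Aut}_G(UT_n)$, $a\mapsto\varphi_a$, is a group homomorphism with image exactly $\mathrm{Aut}_G(UT_n)$ by the two inclusions above; its kernel consists of central invertible homogeneous elements of degree $1$, i.e. scalar matrices (since the center of $UT_n$ is $K\cdot 1$ and $1$ has degree $1$). Hence $\mathrm{Aut}_G(UT_n)\simeq\{a\in UT_n\text{ invertible, homogeneous of degree }1\}/K^\times\simeq H_1^\Gamma$, where in the displayed statement $H_1^\Gamma$ is by definition the group of such $\varphi_a$'s, so in fact the identification is literally $\mathrm{Aut}_G(UT_n)=H_1^\Gamma$ and the "$\simeq$" is just this equality.

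I expect the main obstacle to be the bookkeeping in the second paragraph: extracting from \eqref{main_equation} that $a$ is (or can be replaced by) an honestly invertible homogeneous element of degree $1$, rather than merely a matrix whose off-degree-$1$ entries vanish. The cleanest route is to note that the support condition $a_{lm}\neq 0\Rightarrow\deg e_{lm}=1$ already says $a\in(UT_n)_1$ directly (there is nothing to replace), and invertibility of $a$ in $UT_n$ is equivalent to all $a_{ii}\neq 0$, a condition unaffected by the homogeneity constraint since each $e_{ii}$ has degree $1$. So the argument closes without any genuine difficulty once \eqref{main_equation} is in hand; the only care needed is the remark that $a^{-1}$ is then automatically homogeneous of degree $1$ as well.
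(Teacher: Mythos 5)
Your proposal is correct and follows essentially the same route as the paper: combine the classical fact that every automorphism of $UT_n$ is inner with equation \eqref{main_equation} to force the conjugating matrix $a$ to be supported on the degree-$1$ component (using $\deg e_{ii}=1$, which also guarantees the diagonal entries survive so that invertibility is not an issue), and observe conversely that conjugation by a homogeneous degree-$1$ unit preserves the grading. Your extra remarks on the homogeneity of $a^{-1}$ and on the kernel $K^{\times}\cdot 1$ of $a\mapsto\varphi_a$ are correct elaborations of details the paper leaves implicit, but they do not change the argument.
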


Let $J$ be the set of strict upper triangular matrices (the Jacobson radical of $UT_n$). Let $e_{i:m}=e_{i,i+m}\in UT_n$ and $j\in J^{m+1}$, then for any $\psi\in\text{Aut}(UT_n)$, we have $\psi(e_{i:m}+j)\in\langle e_{i:m}\rangle+J^{m+1}$. As a conclusion, for any grading on $UT_n$, and for all $i,m$, there exists $j_{i,m}\in J^{m+1}$ such that $e_{i:m}+j_{i,m}$ is homogeneous in the grading. Other consequence is that we can't have $\psi\in\text{Aut}(\Gamma)$ unless $\psi\in\text{Stab}(\Gamma)$. This is the statement of
\begin{Prop}\label{prop3}
	Let $UT_n$ be endowed with any $G$-grading $\Gamma$. Then $\text{Aut}(\Gamma)=\text{Stab}(\Gamma)$ and $W(\Gamma)=1$.
\end{Prop}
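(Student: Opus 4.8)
The plan is to prove the non-trivial inclusion $\text{Aut}(\Gamma)\subseteq\text{Stab}(\Gamma)$; the reverse is immediate from the definitions, and then $W(\Gamma)=\text{Aut}(\Gamma)/\text{Stab}(\Gamma)=1$ follows at once. The backbone is the radical filtration $UT_n\supsetneq J\supsetneq J^2\supsetneq\cdots\supsetneq J^{n-1}\supsetneq J^n=0$, which is preserved by every algebra automorphism (hence by every $\psi\in\text{Aut}(\Gamma)$, as such $\psi$ are in particular algebra automorphisms of $UT_n$), together with the fact that each $J^m$ is a graded subspace for any $G$-grading: this is clear for an elementary grading, and every grading is graded isomorphic to an elementary one by \cite{VaZa2007}, so it transports.

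First I would isolate the computational lemma indicated before the statement: for every $\psi\in\text{Aut}(UT_n)$, every $m$ with $1\le m\le n-1$, every $i$ with $1\le i\le n-m$, and every $j\in J^{m+1}$, one has $\psi(e_{i:m}+j)=c\,e_{i:m}+j'$ for some scalar $c\ne 0$ and some $j'\in J^{m+1}$. Write $\psi=\varphi_a$ (automorphisms of $UT_n$ are inner, \cite{J1987,J1995}) and read off \eqref{main_equation}: since $a$ and $a^{-1}$ are upper triangular, in $\varphi_a(e_{i,i+m})$ every matrix unit $e_{l,m'}$ that occurs satisfies $l\le i$ and $m'\ge i+m$, hence $m'-l\ge m$; the only term with $m'-l=m$ is $e_{i,i+m}$ itself, with coefficient $a_{ii}(a_{i+m,i+m})^{-1}\ne 0$, and all others lie in $J^{m+1}$; finally $\varphi_a(j)\in\varphi_a(J^{m+1})=J^{m+1}$. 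Applying this lemma to a graded isomorphism from an elementary grading onto $\Gamma$ and rescaling, I get for each such $i,m$ a homogeneous element $h_{i,m}=e_{i:m}+j_{i,m}$ of $\Gamma$, with $j_{i,m}\in J^{m+1}$; call its degree $g_{i,m}$. Since $\{h_{i,m}:1\le i\le n-m\}$ projects onto a basis of $J^m/J^{m+1}$, the set $\{g_{i,m}:1\le i\le n-m\}$ is exactly $\text{Supp}(J^m/J^{m+1})$, and therefore $\text{Supp}\,\Gamma=\{1\}\cup\bigcup_{m=1}^{n-1}\{g_{i,m}:1\le i\le n-m\}$, the $1$ coming from $UT_n/J$ since the identity matrix is homogeneous of degree $1$.

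Now take $\psi\in\text{Aut}(\Gamma)$ with induced permutation $\alpha\in\text{Sym}(\text{Supp}\,\Gamma)$, so $\psi(A_g)=A_{\alpha(g)}$. Clearly $\alpha(1)=1$ as $\psi(1)=1$; it remains to check $\alpha(g_{i,m})=g_{i,m}$ for all $i,m$. By the lemma, $\psi(h_{i,m})=c\,e_{i:m}+j'$ with $c\ne 0$ and $j'\in J^{m+1}$, while $\psi(h_{i,m})$ is homogeneous of degree $\alpha(g_{i,m})$. If $\alpha(g_{i,m})\ne g_{i,m}$, then $c^{-1}\psi(h_{i,m})\in A_{\alpha(g_{i,m})}$ and $h_{i,m}\in A_{g_{i,m}}$ lie in distinct homogeneous components, so the $A_{g_{i,m}}$-component of $c^{-1}\psi(h_{i,m})-h_{i,m}=c^{-1}j'-j_{i,m}\in J^{m+1}$ equals $-h_{i,m}$; but $J^{m+1}$ is graded, forcing this component into $J^{m+1}$ and contradicting $h_{i,m}=e_{i:m}+j_{i,m}\notin J^{m+1}$. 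Hence $\alpha=\text{id}$, i.e. $\psi(A_g)=A_g$ for all $g$, so $\psi\in\text{Stab}(\Gamma)$; thus $\text{Aut}(\Gamma)=\text{Stab}(\Gamma)$ and $W(\Gamma)=1$.

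The main obstacle is the bookkeeping in the computational lemma — identifying precisely which matrix units survive modulo $J^{m+1}$ in $\varphi_a(e_{i,i+m})$ and verifying the surviving coefficient is invertible — together with the companion point that the degrees $g_{i,m}$, plus $1$, exhaust $\text{Supp}\,\Gamma$. Once these are in place, the graded comparison of $h_{i,m}$ with $\psi(h_{i,m})$ along the filtration is routine.
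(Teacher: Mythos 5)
Your proposal is correct and follows essentially the same route as the paper, which proves this proposition via the two observations in the paragraph preceding it: that any automorphism sends $e_{i:m}+j$ into $\langle e_{i:m}\rangle+J^{m+1}$, and that consequently homogeneous elements $e_{i:m}+j_{i,m}$ exist whose degrees must be fixed by any self-equivalence. You have simply supplied the bookkeeping (the coefficient $a_{ii}a_{i+m,i+m}^{-1}$, the exhaustion of the support, and the graded comparison modulo $J^{m+1}$) that the paper leaves implicit.
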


We proceed now to compute the diagonal subgroup. First, note that in order to have $\varphi_a\in\text{Diag}(UT_n,\eta)$, it is necessary that $\varphi_a(e_{ij})=\lambda_{ij}e_{ij}$, for some (non-zero) $\lambda_{ij}\in K$. From equation (\ref{main_equation}), this condition holds only if $a$ is diagonal. On the other side,  let $a=\text{diag}(a_1,a_2,\ldots,a_n)$ be a diagonal invertible matrix. Then
$$
	\varphi_a(e_{ij})=a_ia_j^{-1}e_{ij},\text{ for all $i\le j$}.
$$
So we must have $a_ia_j^{-1}=a_ka_l^{-1}$ whenever $\deg e_{ij}=\deg e_{kl}$. This combinatorial restriction can be easily solved if we consider the elementary grading given by a sequence $a\in G^n$ (we repeat the construction): a $G$-grading on $UT_n$ is elementary if and only there exists a sequence $(g_1,g_2,g_3,\ldots,g_{n})\in G^n$ such that $\deg e_{ij}=g_ig_j^{-1}$, where we can always assume $g_1=1$. Choose a homomorphism of groups $\chi:G\to(K^\ast,\cdot)$ (we also call $\chi$ a character of $G$; and we denote $\widehat G=\{\chi:G\to(K^\ast,\cdot)\text{ group homomorphism}\}$), and let $a_\chi=\text{diag}(\chi(1),\chi(g_2),\ldots,\chi(g_{n}))$. Clearly $\varphi_{a_\chi}$ will satisfy the conditions for being an automorphism of graded algebras; but not every $\varphi_a\in\text{Diag}(UT_n,\eta)$ is given by such form. We shall introduce the notion of \textit{Universal group grading} of a grading, given in \cite{EldKoc}.

Let $\Gamma':A=\oplus_{g\in G}A_g$ be a $G$-graded algebra. Let $X=\text{Supp}\,\Gamma'$ and consider the free associative group $F\langle X\rangle$ generated by $X$. Consider the relations $R=\{ab^{-1}c^{-1}\mid 0\ne A_bA_c\subset A_a,a,b,c\in\text{Supp}\,A\}$. The \textit{Universal group grading} of $\Gamma'$ is the group $U(\Gamma')=F\langle X\mid R\rangle=F\langle X\rangle/R^{F\langle X\rangle}$. Note that $A$ inherits naturally a $U(\Gamma')$-grading, and this grading is equivalent to $\Gamma'$. We remark also that $\text{Supp}\,\Gamma'\subset U(\Gamma')$.

Now, clearly every element of $\text{diag}(\Gamma)$ results of some $\chi\in\widehat{U(\Gamma)}$. This proves

\begin{Prop}
	Let $UT_n$ be endowed with an elementary $G$-grading given by a sequence $a=(1,g_2,\ldots,g_{n})\in G^n$. Then
	$$
		\text{Diag}(UT_n,a)=\{\varphi_{a_\chi}\mid\chi\in\widehat{U(\Gamma)},a_\chi=\text{diag}(1,\chi(g_2),\ldots,\chi(g_{n}))\}.
	$$
\end{Prop}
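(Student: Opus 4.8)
The plan is to prove the two inclusions separately, reducing everything to the combinatorial restriction ``$a_ia_j^{-1}$ depends only on $\deg e_{ij}$'' isolated just before the statement, and to recognise that restriction as precisely what is needed to manufacture a character of $U(\Gamma)$.

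For the inclusion $\supseteq$, take $\chi\in\widehat{U(\Gamma)}$ and put $a_\chi=\text{diag}(1,\chi(g_2),\ldots,\chi(g_n))$. Since $\deg e_{ij}=g_ig_j^{-1}$, the computation $\varphi_{a_\chi}(e_{ij})=\chi(g_i)\chi(g_j)^{-1}e_{ij}=\chi(g_ig_j^{-1})e_{ij}=\chi(\deg e_{ij})\,e_{ij}$ (the case $i=j$ being covered by $\chi(1)=1$) shows that $\varphi_{a_\chi}$ multiplies every matrix unit of degree $h$ by the single scalar $\chi(h)$; as the matrix units form a homogeneous basis, $\varphi_{a_\chi}$ acts as multiplication by $\chi(h)$ on all of $(UT_n)_h$, so $\varphi_{a_\chi}\in\text{Diag}(UT_n,a)$.

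For the inclusion $\subseteq$, let $\varphi\in\text{Diag}(UT_n,a)$. By Proposition~\ref{ref_to_def} we may write $\varphi=\varphi_d$ with $d\in UT_n$ invertible, and, as noted before the statement, the requirement $\varphi_d(e_{ij})=\lambda_{ij}e_{ij}$ forces $d$ to be diagonal; replacing $d$ by $d_1^{-1}d$, which leaves $\varphi_d$ unchanged, we may assume $d=\text{diag}(1,d_2,\ldots,d_n)$. Then $\varphi_d(e_{ij})=d_id_j^{-1}e_{ij}$, and the definition of $\text{Diag}$ says exactly that $d_id_j^{-1}=d_kd_l^{-1}$ whenever $\deg e_{ij}=\deg e_{kl}$. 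Now I would build $\chi\in\widehat{U(\Gamma)}$ as follows. Since $\{e_{ij}\mid i\le j\}$ is a homogeneous basis, $\text{Supp}\,\Gamma=\{\deg e_{ij}\mid 1\le i\le j\le n\}$, so one may define a map $\chi$ on $\text{Supp}\,\Gamma$ by $\chi(\deg e_{ij})=d_id_j^{-1}$, which is well defined precisely by the combinatorial restriction. Extending $\chi$ multiplicatively to $F\langle\text{Supp}\,\Gamma\rangle$, by the universal property of $U(\Gamma)=F\langle X\mid R\rangle$ it remains to check that $\chi$ kills every relator of $R$. A relator $ab^{-1}c^{-1}\in R$ comes from $0\ne(UT_n)_b(UT_n)_c\subset(UT_n)_a$; for an elementary grading this forces indices $i\le j\le k$ with $b=\deg e_{ij}$, $c=\deg e_{jk}$, $a=\deg e_{ik}$, whence $\chi(a)\chi(b)^{-1}\chi(c)^{-1}=(d_id_k^{-1})(d_id_j^{-1})^{-1}(d_jd_k^{-1})^{-1}=1$ by telescoping. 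Thus $\chi$ descends to a character of $U(\Gamma)$. Finally, since $g_1=1$ we have $\deg e_{1i}=g_i^{-1}$, so $\chi(g_i)^{-1}=\chi(g_i^{-1})=d_1d_i^{-1}=d_i^{-1}$, i.e. $\chi(g_i)=d_i$; hence $a_\chi=\text{diag}(1,\chi(g_2),\ldots,\chi(g_n))=d$ and $\varphi=\varphi_{a_\chi}$.

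The one point that needs genuine care — the main obstacle, such as it is — is the bookkeeping between $G$ and $U(\Gamma)$: the $g_i$ in the sequence are elements of $G$, while $\chi$ is a character of $U(\Gamma)$, so one must first observe that the induced $U(\Gamma)$-grading on $UT_n$ is again elementary, carried by a sequence $(1,\bar g_2,\ldots,\bar g_n)\in U(\Gamma)^n$ lying over $(1,g_2,\ldots,g_n)$ under the canonical homomorphism $U(\Gamma)\to\langle\text{Supp}\,\Gamma\rangle\le G$, and then read $\chi(g_i)$ as $\chi(\bar g_i)$ — which is legitimate because $\bar g_i^{-1}=\deg e_{1i}\in\text{Supp}\,\Gamma\subset U(\Gamma)$, so $\bar g_i$ really does lie in $U(\Gamma)$. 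Once this identification is made explicit, both inclusions reduce to the short computations above and nothing further is required.
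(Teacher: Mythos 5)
Your proof is correct and follows the same route as the paper: reduce to diagonal matrices via the requirement $\varphi_d(e_{ij})=\lambda_{ij}e_{ij}$, extract the combinatorial restriction $d_id_j^{-1}=d_kd_l^{-1}$ whenever $\deg e_{ij}=\deg e_{kl}$, and recognize that restriction as defining a character of $U(\Gamma)$. The paper dispatches the inclusion $\subseteq$ with a single ``clearly''; your explicit check that $\deg e_{ij}\mapsto d_id_j^{-1}$ kills the relators of $U(\Gamma)$, and your care with the $G$-versus-$U(\Gamma)$ bookkeeping, merely supply the details the paper omits.
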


\noindent We summarize all the computations of this section
\begin{Theo}
	Consider any elementary $G$-grading $\Gamma$ on $UT_n$, and let $a=(1,g_2,\ldots,g_{n})\in G^n$ be the sequence defining the grading. Then
	\begin{eqnarray*}
		\text{Aut}(\Gamma)=\text{Stab}(\Gamma)&=&\{\varphi_a\mid a\in UT_n\text{ invertible, homogeneous of degree $1$}\},\\
		W(\Gamma)&=&1,\\
		\text{Diag}(\Gamma)&=&\{\varphi_{a_\chi}\mid\chi\in\widehat{U(\Gamma)},a_\chi=\text{diag}(1,\chi(g_2),\ldots,\chi(g_{n}))\}.
	\end{eqnarray*}

\end{Theo}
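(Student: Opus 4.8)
The theorem is a compilation of the three Propositions already proved in this section, so the plan is simply to assemble them; there is no genuinely new content beyond checking that the isomorphisms produced there are literal equalities of subgroups of $\text{Aut}(UT_n)$.

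For the first displayed line I would argue in two steps. By Proposition \ref{prop3} every self-equivalence of $\Gamma$ already fixes each homogeneous component setwise, hence $\text{Aut}(\Gamma)=\text{Stab}(\Gamma)$. Then Proposition \ref{ref_to_def} identifies $\text{Stab}(\Gamma)=\text{Aut}_G(UT_n)$ with $H_1^\Gamma$, and here I would make explicit that this identification is literal: since every automorphism of $UT_n$ is inner by \cite{J1987,J1995}, an element of $\text{Stab}(\Gamma)$ has the form $\varphi_a$, and the discussion following \eqref{main_equation} forces $a$ itself to be homogeneous of degree $1$, while conversely conjugation by an invertible homogeneous matrix of degree $1$ preserves degrees and so yields a graded automorphism. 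This gives $\text{Aut}(\Gamma)=\text{Stab}(\Gamma)=\{\varphi_a\mid a\in UT_n\text{ invertible, homogeneous of degree }1\}$, and then the middle line $W(\Gamma)=1$ is immediate from the definition $W(\Gamma)=\text{Aut}(\Gamma)/\text{Stab}(\Gamma)$.

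For the last line I would cite the final Proposition of the section and recall the reasoning behind it. If $\varphi_a\in\text{Diag}(\Gamma)$, then reading off \eqref{main_equation} forces $a$ to be diagonal, $a=\text{diag}(1,a_2,\ldots,a_n)$, so that $\varphi_a(e_{ij})=a_ia_j^{-1}e_{ij}$ and membership in $\text{Diag}(\Gamma)$ is equivalent to the compatibility $a_ia_j^{-1}=a_ka_l^{-1}$ whenever $\deg e_{ij}=\deg e_{kl}$. Writing the grading by the sequence $(1,g_2,\ldots,g_n)$ with $\deg e_{ij}=g_ig_j^{-1}$, this says exactly that the rule $g_ig_j^{-1}\mapsto a_ia_j^{-1}$ is a well-defined function on $\text{Supp}\,\Gamma$. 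The step that deserves care --- and the one place where the argument does more than quote a Proposition --- is to see that any such well-defined rule automatically respects the defining relations $R$ of the universal group $U(\Gamma)$, so that it extends uniquely to a character $\chi\in\widehat{U(\Gamma)}$ with $a=a_\chi$; the converse, that each $\chi\in\widehat{U(\Gamma)}$ gives $a_\chi=\text{diag}(1,\chi(g_2),\ldots,\chi(g_n))$ with $\varphi_{a_\chi}\in\text{Diag}(\Gamma)$, is straightforward. I expect reconciling the finitely many matrix-unit compatibility relations with the presentation of $U(\Gamma)$ by generators and relations to be the main (and only) obstacle, and it is settled by the universal property of $U(\Gamma)$ recalled just before the Proposition. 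Putting the three lines together finishes the proof.
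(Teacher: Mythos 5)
Your proposal is correct and follows the paper's own route: the theorem is stated there as a summary of the section, and its justification is exactly the assembly of Proposition~\ref{prop3}, Proposition~\ref{ref_to_def} together with the inner-automorphism discussion after equation~(\ref{main_equation}), and the diagonal-group proposition via characters of $U(\Gamma)$. Your added care about the literal equality $\text{Stab}(\Gamma)=H_1^\Gamma$ and about extending the compatibility relations to a character of the universal group only makes explicit what the paper leaves implicit.
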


\noindent\textbf{Graded involutions.} Now, we deal with the graded involutions. The graded involutions were already computed in \cite{VaZa2009}, under certain conditions, but here we cite the validity of the result despite of these restrictions. We will assume until the end of this section that the characteristic of $K$ is not $2$.

Recall that for an associative algebra $A$, an additive map $\ast:A\to A$ is called an involution if $(a^\ast)^\ast=a$, for all $a\in A$; and $(ab)^\ast=b^\ast a^\ast$, for all $a,b\in A$. We call $\ast$ an involution of first kind if $\ast$ is the identity in the center of $A$, and of second kind otherwise. Since the center of $UT_n$ is the scalar multiples of the identity then being an involution of first kind is equivalent of requiring $\ast$ to be linear. We shall study only the case of linear involutions. For the special case where $A$ have a $G$-grading, we name $\ast$ a \textit{graded involution} if $G\text{-}\deg a^\ast=G\text{-}\deg a$, for all homogeneous $a\in A$.

First, we recall some facts concerning involutions in the ungraded case. For the particular case $A=UT_n$, $t:e_{ij}\mapsto e_{n-j+1,n-i+1}$ is an example of involution. Another example is the \textit{symplectic involution} defined bellow, only when $n$ is even.

Let $n=2m$, and consider the diagonal matrix
$$D=\text{diag}(\underbrace{1,1,\ldots,1}_{\text{$m$ times}},\underbrace{-1,-1,\ldots,-1}_{\text{$m$ times}}),$$
note that $D^{-1}=D$. Let $s:UT_n\to UT_n$ be defined by $s(x)=Dt(x)D$, for all $x\in UT_n$. Then $s$ is indeed an involution and we call $s$ the symplectic involution of $UT_n$.

Now, let $\ast$ be any linear involution on $UT_n$. Then the composition $t\circ^\ast$ is an automorphism of $UT_n$; being every automorphism of $UT_n$ inner, we see that necessarily $x^\ast=At(x)A^{-1}$, for some invertible matrix $A\in UT_n$. Moreover, using the relation $(x^\ast)^\ast=x$, we see that necessarily $A^t=A$ or $A^t=-A$. Conversely, every invertible $A\in UT_n$ satisfying $A^t=A$ or $A^t=-A$ construe an involution if we put $\varphi_A\circ t$.

If $(A,\ast)$ and $(B,\circ)$ are two algebras with involutions, we say that $\ast$ and $\circ$ are equivalents if there exists an isomorphism of algebras $f:A\to B$ such that $f(x^\ast)=f(x)^\circ$, for all $x\in A$. In consideration of involutions on the $UT_n$, it was proved in \cite{VinKoSca} that every involution on $UT_n$ is equivalent either to the canonical $t$, or the symplectic $s$.

Now assume an elementary grading $\Gamma$, given by $\eta=(g_1,\ldots,g_{n-1})\in G^{n-1}$, on $UT_n$, and assume that $\ast$ is a graded involution on $UT_n$.
\begin{Lemma}
	In this case, necessarily $\eta$ is symmetric and $\text{Supp}\,\Gamma$ is commutative.
\end{Lemma}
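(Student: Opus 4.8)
The plan is to exploit the description of linear involutions on $UT_n$ recalled above: any graded involution $\ast$ has the form $x^\ast=A\,t(x)\,A^{-1}$ for some invertible $A\in UT_n$ (the condition $A^t=\pm A$, coming from $\ast^2=\mathrm{id}$, will play no role here). Writing $A=(a_{kl})$ and $A^{-1}=(b_{kl})$, both matrices are upper triangular with nonzero diagonal entries, and for $i\le j$ a computation identical to the one giving (\ref{main_equation}) yields
$$ e_{ij}^\ast=A\,e_{n-j+1,\,n-i+1}\,A^{-1}=\sum_{l,m}a_{l,\,n-j+1}\,b_{n-i+1,\,m}\,e_{lm}. $$
The point I would stress is that among these coefficients the one of $e_{n-j+1,\,n-i+1}$ equals $a_{n-j+1,\,n-j+1}\,b_{n-i+1,\,n-i+1}\ne 0$, precisely because $A$ and $A^{-1}$ are invertible and upper triangular.

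Next I would use that $\Gamma$ is elementary, so each homogeneous component $A_g$ is the span of the matrix units it contains, and that $\ast$ is graded, so $e_{ij}^\ast\in A_{\deg e_{ij}}$. Hence every matrix unit occurring in $e_{ij}^\ast$ with nonzero coefficient has degree $\deg e_{ij}$; applying this to the term $e_{n-j+1,\,n-i+1}$ gives
$$ \deg e_{n-j+1,\,n-i+1}=\deg e_{ij}\qquad\text{for all }1\le i\le j\le n. $$
Specializing to $j=i+1$ and recalling $\deg e_{i,i+1}=g_i$, this reads $g_{n-i}=g_i$ for $1\le i\le n-1$, i.e. $\eta=\mathrm{rev}\,\eta$, so $\Gamma$ is symmetric.

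For the commutativity of $\mathrm{Supp}\,\Gamma$ I would apply $\ast$ to the relation $e_{ij}e_{jk}=e_{ik}$ with $i\le j\le k$, obtaining $e_{jk}^\ast e_{ij}^\ast=e_{ik}^\ast$; all three factors are homogeneous since $\ast$ is graded, the left-hand side is nonzero of degree $\deg e_{jk}\cdot\deg e_{ij}$, and the right-hand side has degree $\deg e_{ik}=\deg e_{ij}\cdot\deg e_{jk}$, so $\deg e_{ij}$ and $\deg e_{jk}$ commute in $G$ whenever $i\le j\le k$. Since $\deg e_{ij}=g_ig_{i+1}\cdots g_{j-1}$ and every element of $\mathrm{Supp}\,\Gamma$ has this form, I would unwind these relations: $g_{j-1}$ commutes with $g_jg_{j+1}\cdots g_{k-1}$ for all $k>j$, hence, by induction on $k-j$, with each $g_k$ for $k\ge j$; thus $g_1,\dots,g_{n-1}$ pairwise commute, the subgroup of $G$ they generate is abelian, and it contains $\mathrm{Supp}\,\Gamma$.

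The computations themselves are routine; the only step needing a bit of care is the last one, since commutativity of the full support does not follow at once from the generic fact ``$A_bA_c\ne 0\Rightarrow bc=cb$'' (in $UT_n$ most products $A_bA_c$ vanish), so one must descend to the generators $g_i$ and use the telescoping identities $\deg e_{ij}=\deg e_{i,i+1}\cdots\deg e_{j-1,j}$. Everything else is bookkeeping with the index reversal $e_{ij}\mapsto e_{n-j+1,\,n-i+1}$ induced by $t$.
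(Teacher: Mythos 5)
Your argument is correct and follows essentially the same route as the paper: the symmetry of $\eta$ comes from the leading term $e_{n-j+1,n-i+1}$ of $e_{ij}^\ast=A\,t(e_{ij})A^{-1}$ having nonzero coefficient, and the commutativity of the support comes from applying the degree-preserving, product-reversing map $\ast$ to products of matrix units and inducting down to the generators $g_i$. The only (immaterial) difference is bookkeeping: the paper uses the triple products $e_{i,i+1}e_{i+1,j}e_{j,j+1}$ with induction on $[\deg e_{i+1,j},g_i]=1$, whereas you use the two-fold relations $e_{ij}e_{jk}=e_{ik}$ and telescope.
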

\begin{proof}
	We know that $\ast=\varphi_a\circ t$, for some $\varphi_a$. Using the discussion before Proposition \ref{prop3}, we see that necessarily $e_{i,i+1}^\ast\in\langle e_{n-i,n-i+1}\rangle+J^2$, hence $g_i=g_{n-i}$, which implies $\eta$ symmetric. Moreover, we have:
	$$
		(e_{i,i+1} e_{i+1,i+2})^\ast=e_{i+1,i+2}^\ast e_{i,i+1}^\ast,
	$$
	hence $g_ig_{i+1}=g_{i+1}g_i$ for all $i$. If $i<j$, and letting $g=\deg e_{i+1,j}=g_{i+1}g_{i+2}\cdots g_{j-1}$, we have
	$$
		(e_{i,i+1}e_{i+1,j}e_{j,j+1})^\ast=e_{j,j+1}^\ast e_{i+1,j}^\ast e_{i,i+1}^\ast,
	$$
	so $g_igg_j=g_jgg_i$. By induction hypothesis, we can assume that $[g,g_i]=1$, which implies $[g_i,g_j]=1$. Since $g_1,g_2,\ldots,g_{n-1}$ generates the support of the grading, we see that $\text{Supp}\,\Gamma$ must be commutative.
\end{proof}

Now, we can repeat exactly all the arguments in \cite{VinKoSca} adapting for the graded case, or repeat the arguments in \cite{VaZa2009} to prove:
\begin{Theo}
	Let $UT_n$ be endowed with an elementary grading $\Gamma$ given by $\eta\in G^{n-1}$. If $UT_n$ admits a graded involution $\ast$, then necessarily $\text{Supp}\,\Gamma$ is commutative, $\eta$ is symmetric and $\ast$ is equivalent either to the canonical involution $t$, or the symplectic involution $s$ (this last case may happen only if $n$ is even).

	Conversely, if $\eta=(g_1,g_2,\ldots,g_{n-1})\in G^{n-1}$ is a symmetric sequence of two-by-two commuting elements, and $a\in UT_n$ is any invertible homogeneous matrix with degree $1$ such that $t(a)=a$ or $t(a)=-a$, then $\ast=\varphi_a\circ t$ is a graded involution for $(UT_n,\eta)$.
\end{Theo}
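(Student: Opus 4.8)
The plan is to treat the two implications separately. For the direct implication the commutativity of $\text{Supp}\,\Gamma$ and the symmetry of $\eta$ are already the content of the Lemma above, so only the identification of $\ast$ up to equivalence will remain; the converse will be a direct verification. The one substantive point throughout is that, once $\eta$ is symmetric, the flip $t$ becomes a \emph{graded} antiautomorphism and the whole discussion can be confined to the identity component of the grading.

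So, assuming $\ast$ is a graded involution, the Lemma gives $\eta$ symmetric and $\text{Supp}\,\Gamma$ commutative. A short computation with $\deg e_{ij}=g_ig_{i+1}\cdots g_{j-1}$ shows $\deg t(e_{ij})=g_{j-1}g_{j-2}\cdots g_i$, which equals $\deg e_{ij}$ precisely because the $g_i$ commute; hence $t$ is a graded antiautomorphism of $(UT_n,\eta)$. Then $\ast\circ t$ is a graded automorphism, so by Proposition~\ref{ref_to_def} there is an invertible $a\in UT_n$, homogeneous of degree $1$, with $\ast\circ t=\varphi_a$, i.e. $\ast=\varphi_a\circ t$. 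Imposing $(x^\ast)^\ast=x$ forces, as already noted in the text, $a\,t(a)^{-1}$ to be central in $UT_n$, hence a scalar, and applying $t$ again forces that scalar to be $\pm1$; thus $t(a)=a$ or $t(a)=-a$. In the skew case, if $n$ were odd the central diagonal entry of $a$ would satisfy $a_{kk}=-a_{kk}$, hence vanish, contradicting invertibility of the triangular matrix $a$; so $n$ is even there.

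To finish the direct implication, note that, being homogeneous of degree $1$, $a$ lies in $(UT_n)_1$, which is a unital subalgebra stable under $t$, and for which every $\varphi_g$ with $g\in(UT_n)_1$ invertible is a graded automorphism; such a $\varphi_g$ transforms the matrix of $\varphi_a\circ t$ by the congruence $a\mapsto g\,a\,t(g)$, again inside $(UT_n)_1$. One then reruns the normalization of \cite{VinKoSca,VaZa2009}: since $a$, and each of its partial reductions, is supported only on positions $(i,j)$ with $\deg e_{ij}=1$, every elementary conjugating matrix that occurs has the form $1+\mu e_{pq}$ with $\deg e_{pq}=1$, hence is homogeneous of degree $1$, and the procedure brings $a$ to the identity matrix when $t(a)=a$ and to $D$ when $t(a)=-a$ ($n$ even). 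This exhibits a graded equivalence of $\ast$ with $\varphi_1\circ t=t$, respectively $\varphi_D\circ t=s$.

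For the converse, let $\eta=(g_1,\dots,g_{n-1})$ be symmetric with pairwise commuting entries and $a\in UT_n$ invertible, homogeneous of degree $1$, with $t(a)=\pm a$. By the computation in the second paragraph, $t$ is a graded antiautomorphism of $(UT_n,\eta)$, hence $\ast:=\varphi_a\circ t$ is an antiautomorphism with $\deg x^\ast=\deg t(x)=\deg x$ for every homogeneous $x$, so $\ast$ is graded; and $\ast^2(x)=a\,t(a)^{-1}\,x\,t(a)\,a^{-1}=x$ because $a\,t(a)^{-1}=\pm1$, so $\ast$ is an involution (taking $a=1$ gives $t$, $a=D$ gives $s$, so the statement is non-vacuous). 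The one place needing care is the third paragraph, namely that the ungraded normalization of a symmetric (resp. skew) invertible matrix can be carried out without leaving the identity component $(UT_n)_1$ — equivalently, that every conjugation used is homogeneous of degree $1$; this is exactly what ``adapting the arguments of \cite{VinKoSca} to the graded case'' amounts to, while everything else is either the Lemma or routine.
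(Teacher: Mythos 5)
Your proposal is correct and follows essentially the same route as the paper: the paper's own proof consists of invoking the preceding Lemma for the symmetry of $\eta$ and the commutativity of $\text{Supp}\,\Gamma$, and then stating that the arguments of \cite{VinKoSca} (or \cite{VaZa2009}) can be repeated, adapted to the graded case. You simply fill in more of the details the paper leaves implicit --- that $t$ is a graded antiautomorphism once $\eta$ is symmetric, that $a$ is homogeneous of degree $1$ by Proposition~\ref{ref_to_def}, the derivation of $t(a)=\pm a$, the parity obstruction, and the observation that the normalization of \cite{VinKoSca} stays inside the identity component --- all of which is consistent with the intended argument.
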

In particular, $\ast$ is equivalent to $t$ if and only if $e_{1n}^\ast=e_{1n}$; $\ast$ is equivalent to $s$ if and only if $e_{1n}^\ast=-e_{1n}$. Also, if $\ast=\varphi_a\circ t$, then $\ast$ is equivalent to $t$ if and only if $t(a)=a$; $\ast$ is equivalent to $s$ if and only if $t(a)=-a$.

\section{Elementary gradings on the non-associative cases}

We start with the Jordan case. Fix an elementary grading $\eta$ on $\text{UJ}_n$. We compute the graded automorphisms for $\eta$. The involution $t$ is always an self-equivalence of $\text{UJ}_n$; moreover $t$ is a graded automorphism of $(\text{UJ}_n,\eta)$ if and only if $\eta$ is symmetric (see definition after Proposition \ref{ref_to_def}). Also, a map $\varphi_a$ will be a graded automorphism if and only if $\varphi:\text{UJ}_n\to\text{UJ}_n$ is a graded map. This happens if and only if $\varphi_a:(UT_n,\eta)\to(UT_n,\eta)$ is a graded map (hence a graded automorphism in the associative case).

Note that a similar discussion as in the associative case can be given for the diagonal group in the Jordan case, since we have the same combinatorial restriction.

All those discussions prove
\begin{Theo}
	Consider an elementary $G$-grading $\Gamma$ on $\text{UJ}_n$, given by $a=(1,g_2,\ldots,g_{n})\in G^n$, where $G$ is commutative. Then
	\begin{enumerate}
		\renewcommand{\labelenumi}{(\roman{enumi})}
		\item $\text{Aut}(\Gamma)=\langle t\rangle\rtimes H_1^\Gamma$.
		\item If $\eta\ne\text{rev}\,\eta$, then $\text{Stab}(\Gamma)=H_1^\Gamma$ and $W(\Gamma)\simeq\mathbb{Z}_2$.
		\item If $\eta=\text{rev}\,\eta$, then $\text{Stab}(\Gamma)=\langle t\rangle\rtimes H_1^\Gamma$ and $W(\Gamma)\simeq1$.
		\item $\text{Diag}(\Gamma)=\{\varphi_{a_\chi}\mid\chi\in\widehat{U(\Gamma)},a_\chi=\text{diag}(1,\chi(g_2),\ldots,\chi(g_{n}))\}$.
	\end{enumerate}
\end{Theo}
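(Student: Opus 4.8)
The plan is to bootstrap from the known description $\text{Aut}(\text{UJ}_n)\simeq\langle t\rangle\rtimes G_0$ together with the filtration argument already used for $UT_n$. Let $\phi\in\text{Aut}(\Gamma)$ be a self-equivalence of $(\text{UJ}_n,\Gamma)$; being in particular an automorphism of $\text{UJ}_n$, I would write it uniquely as $\phi=t^{\varepsilon}\varphi_a$ with $\varepsilon\in\{0,1\}$ and $a\in UT_n$ invertible. Since $t$ is itself a self-equivalence of $(\text{UJ}_n,\Gamma)$ and $t^{2}=1$, the composition $\varphi_a=t^{\varepsilon}\phi$ is again a self-equivalence of $(\text{UJ}_n,\Gamma)$. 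But being a self-equivalence only concerns the fixed vector-space decomposition $\text{UJ}_n=\oplus_{g}A_g$, which is literally the decomposition of $UT_n$; hence $\varphi_a$, also an algebra automorphism of $UT_n$, is a self-equivalence of $(UT_n,\Gamma)$, and Proposition \ref{prop3} forces $\varphi_a\in\text{Stab}(UT_n,\Gamma)=H_1^\Gamma$. Conversely $t$ is a self-equivalence and, by the discussion preceding the theorem, $\varphi_a\in H_1^\Gamma$ if and only if $\varphi_a$ is a graded automorphism of $\text{UJ}_n$. This already yields $\text{Aut}(\Gamma)=\langle t\rangle\cdot H_1^\Gamma$.

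To upgrade this to the semidirect-product statement (i), I would check $\langle t\rangle\cap H_1^\Gamma\subseteq\langle t\rangle\cap G_0=1$ (using $\text{Aut}(\text{UJ}_n)\simeq\langle t\rangle\rtimes G_0$, valid for $n\ge2$; the case $n=1$ is trivial) and normality of $H_1^\Gamma$ in $\langle t\rangle\cdot H_1^\Gamma$. For normality, conjugating $\varphi_a\in H_1^\Gamma$ by $\varphi_b\in H_1^\Gamma$ gives $\varphi_{bab^{-1}}$ with $bab^{-1}$ again homogeneous of degree $1$, while conjugating by $t$ gives $t\varphi_a t=\varphi_{t(a)^{-1}}$ because $t$ is an anti-automorphism of $UT_n$. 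The one point needing care is that $t(a)^{-1}$ is again homogeneous of degree $1$: since $t(e_{nn})=e_{11}\in A_1$ and a self-equivalence sends $A_1$ into a single homogeneous component, $t(A_1)=A_1$, so $t(a)\in A_1$ and hence so is $t(a)^{-1}$.

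For (ii) and (iii) I would use that every $\varphi_a\in H_1^\Gamma$ lies in $\text{Stab}(\Gamma)$, so $t^{\varepsilon}\varphi_a\in\text{Stab}(\Gamma)$ exactly when $t^{\varepsilon}\in\text{Stab}(\Gamma)$, and $t$ is a graded automorphism of $(\text{UJ}_n,\eta)$ precisely when $\eta=\text{rev}\,\eta$ (stated in the text). Hence $\text{Stab}(\Gamma)=H_1^\Gamma$ if $\eta\neq\text{rev}\,\eta$ and $\text{Stab}(\Gamma)=\langle t\rangle\rtimes H_1^\Gamma$ if $\eta=\text{rev}\,\eta$; consequently $W(\Gamma)=\text{Aut}(\Gamma)/\text{Stab}(\Gamma)$ is generated by the class of $t$, giving $\mathbb{Z}_2$ in the first case and the trivial group in the second.

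Finally, for (iv): given $\varphi\in\text{Diag}(\Gamma)$, write $\varphi=t^{\varepsilon}\varphi_a$ with $\varphi_a\in H_1^\Gamma$; the possibility $\varepsilon=1$ is excluded because, by (\ref{main_equation}), $\varphi_a(e_{11})$ is supported on the first row, so $t\varphi_a(e_{11})$ is supported on the last column and cannot be a nonzero multiple of $e_{11}$ for $n\ge2$. Thus $\varphi=\varphi_a$. As each $e_{ij}$ is homogeneous in an elementary grading, $\varphi_a(e_{ij})=\lambda_{\deg e_{ij}}e_{ij}$ for all $i\le j$, which by (\ref{main_equation}) is exactly the combinatorial restriction of the associative case and forces $a$ to be diagonal, say $a=\text{diag}(1,a_2,\dots,a_n)$ after rescaling. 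The relations $a_ia_j^{-1}=a_ka_l^{-1}$ whenever $\deg e_{ij}=\deg e_{kl}$ say that $\deg e_{ij}\mapsto a_ia_j^{-1}$ is a well-defined multiplicative map on $\text{Supp}\,\Gamma$, which by the universal property of $U(\Gamma)$ extends to a character $\chi\in\widehat{U(\Gamma)}$ with $a=a_\chi$; conversely each $\varphi_{a_\chi}$ acts by $\chi(g)$ on $A_g$ and lies in $H_1^\Gamma$, hence in $\text{Diag}(\Gamma)$. The main obstacle in the whole argument is the very first step, namely ruling out an inner automorphism that merely permutes the homogeneous components without fixing each one; this is precisely where the filtration argument behind Proposition \ref{prop3} is needed, and everything else is bookkeeping around the flip $t$.
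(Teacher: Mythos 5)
Your proposal is correct and follows essentially the same route as the paper: decompose any self-equivalence as $t^{\varepsilon}\varphi_a$ via $\text{Aut}(\text{UJ}_n)\simeq\langle t\rangle\rtimes G_0$, reduce the inner part to the associative case (where $\text{Aut}(\Gamma)=\text{Stab}(\Gamma)=H_1^\Gamma$ by Proposition \ref{prop3}), observe that $t\in\text{Stab}(\Gamma)$ exactly when $\eta=\text{rev}\,\eta$, and reuse the combinatorial/character argument for $\text{Diag}(\Gamma)$. You merely make explicit some points the paper leaves implicit (the triviality of $\langle t\rangle\cap H_1^\Gamma$, normality of $H_1^\Gamma$ under conjugation by $t$, and the exclusion of $t\varphi_a$ from the diagonal group), while inheriting the paper's unproved assertion that $t$ is always a self-equivalence of $(\text{UJ}_n,\eta)$.
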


Now, let us study the Lie case. Note that if we fix an elementary grading on $UT_n^{(-)}$, then every map $\psi_s\in G_1$ is a graded automorphism. Previous discussion holds also for the Lie case. The elements of $G_1$ cannot belong to the diagonal group; hence the discussion turns out to be similar in the Jordan case. We obtain
\begin{Theo}
	Consider an elementary $G$-grading $\Gamma$ on $UT_n^{(-)}$, given by $a=(1,g_2,\ldots,g_{n})\in G^n$. Then, if $n>2$
	\begin{enumerate}
		\renewcommand{\labelenumi}{(\roman{enumi})}
		\item $\text{Aut}(\Gamma)\simeq\langle\omega\rangle\rtimes(G_1\times H_1^\Gamma)$.
		\item If $\eta\ne\text{rev}\,\eta$, then $\text{Stab}(\Gamma)\simeq G_1\times H_1^\Gamma$ and $W(\Gamma)\simeq\mathbb{Z}_2$.
		\item If $\eta=\text{rev}\,\eta$, then $\text{Stab}(\Gamma)\simeq\langle\omega\rangle\rtimes(G_1\times H_1^\Gamma)$ and $W(\Gamma)\simeq1$.
		\item $\text{Diag}(\Gamma)=\{\varphi_{a_\chi}\mid\chi\in\widehat{U(\Gamma)},a_\chi=\text{diag}(1,\chi(g_2),\ldots,\chi(g_{n}))\}$.
	\end{enumerate}
	For $n=2$, we have $\text{Aut}(\Gamma)=\text{Stab}(\Gamma)\simeq G_1\times H_1^\Gamma$, $W(\Gamma)=1$ and $\text{Diag}(\Gamma)=\{\varphi_a\mid a=\text{diag}(1,x),x\in K^\ast\}$.
\end{Theo}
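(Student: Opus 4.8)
The plan is to run the argument in parallel with the Jordan case, with $\omega$ now playing the role of $t$ and with the extra one-parameter family $G_1$ present. For $n\ge 3$ I would start from the known description $\text{Aut}(UT_n^{(-)})\simeq\langle\omega\rangle\rtimes(G_0\times G_1)$: every algebra automorphism of $UT_n^{(-)}$ is uniquely $\omega^{\epsilon}\psi_s\varphi_a$ with $\epsilon\in\{0,1\}$, $\psi_s\in G_1$ and $\varphi_a\in G_0$, so the theorem reduces to deciding which of these lie in $\text{Aut}(\Gamma)$, in $\text{Stab}(\Gamma)$ and in $\text{Diag}(\Gamma)$. The engine is the filtration remark preceding Proposition~\ref{prop3}: each of $\varphi_a$, $\psi_s$ and $\omega$ maps $J^m$ onto $J^m$ (because $J^m$ is an associative ideal of $UT_n$, because $\psi_s$ fixes every strictly upper $e_{ij}$, and because $t(J^m)=J^m$, respectively), so every automorphism preserves $UT_n^{(-)}\supset J\supset J^2\supset\cdots$; moreover, by equation~(\ref{main_equation}), $\varphi_a$ sends $e_{i:m}$ to a nonzero multiple of itself modulo $J^{m+1}$, while $\omega$ sends it to $-e_{n-i-m+1:m}$.

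First I would dispose of the two transparent families. Since $\psi_s(e_{ij})=e_{ij}$ for $i<j$ and $\psi_s(e_{ii})=e_{ii}+a_iI$, with $I$ and every $e_{ii}$ homogeneous of degree $1$, each $\psi_s$ is a graded automorphism, so $G_1\subseteq\text{Stab}(\Gamma)$. For $\varphi_a$: as a linear map it is graded for $UT_n^{(-)}$ exactly when it is graded for $UT_n$, i.e.\ exactly when $a$ is invertible homogeneous of degree $1$, that is $\varphi_a\in H_1^\Gamma$; and since in an elementary grading every $e_{i:m}$ is homogeneous, the filtration remark forces any self-equivalence of the form $\psi_s\varphi_a$ to fix the degree of each $e_{i:m}$, hence to induce the trivial permutation of $\text{Supp}\,\Gamma$, hence to be already a graded automorphism. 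So among the automorphisms with $\epsilon=0$, precisely $G_1\times H_1^\Gamma$ lies in $\text{Aut}(\Gamma)$, and it lies in $\text{Stab}(\Gamma)$; here $G_1$ and $H_1^\Gamma$ commute elementwise, being subgroups of $G_0\times G_1$.

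Next comes the analysis of $\omega$, which I expect to be the main obstacle. From $\omega(e_{i:m})=-e_{n-i-m+1:m}$ one sees that $\omega$ permutes the matrix units, up to sign, by the reflection $e_{ij}\mapsto e_{n-j+1,n-i+1}$; the key point is to verify that this reflection respects the partition of the matrix units into homogeneous components, so that $\omega$ maps each component onto a component and is therefore a self-equivalence, inducing on $\text{Supp}\,\Gamma$ the permutation prescribed by the reflection. That permutation is trivial --- equivalently $\omega\in\text{Stab}(\Gamma)$ --- exactly when $\deg e_{ij}=\deg e_{n-j+1,n-i+1}$ for all $i\le j$; specializing to $j=i+1$ gives $g_i=g_{n-i}$ for all $i$, i.e.\ $\eta=\text{rev}\,\eta$, while conversely symmetry of $\eta$, together with commutativity of $\text{Supp}\,\Gamma$, makes the reflection degree-preserving. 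Using $\omega^2=1$, that $\omega$ normalizes $G_0\times G_1$, and that $\psi_s$ commutes with $\varphi_a$, the two cases assemble to $\text{Aut}(\Gamma)=\langle\omega\rangle\rtimes(G_1\times H_1^\Gamma)$, with $\text{Stab}(\Gamma)=G_1\times H_1^\Gamma$ when $\eta\ne\text{rev}\,\eta$ and $\text{Stab}(\Gamma)=\langle\omega\rangle\rtimes(G_1\times H_1^\Gamma)$ when $\eta=\text{rev}\,\eta$; as $\text{Stab}(\Gamma)$ is normal in $\text{Aut}(\Gamma)$, the Weyl group $W(\Gamma)=\text{Aut}(\Gamma)/\text{Stab}(\Gamma)$ is $\mathbb{Z}_2$ in the first case and trivial in the second. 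This gives (i), (ii) and (iii).

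For (iv) the computation is the one already carried out in the associative and Jordan cases: $\varphi_a\in\text{Diag}(\Gamma)$ forces $a$ to be diagonal, by equation~(\ref{main_equation}), with $a_ia_j^{-1}$ depending only on $\deg e_{ij}$, which is exactly the requirement that $a=a_\chi=\text{diag}(1,\chi(g_2),\ldots,\chi(g_n))$ for some character $\chi\in\widehat{U(\Gamma)}$ of the universal group of $\Gamma$; moreover neither a nontrivial $\psi_s$ (not scalar on $A_1$, since $e_{ii}\mapsto e_{ii}+a_iI$) nor $\omega$ (not position-preserving on the matrix units) can lie in $\text{Diag}(\Gamma)$. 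Finally, $n=2$ is handled directly: $\text{Aut}(UT_2^{(-)})=G_0\times G_1$ has no $\omega$, the only elementary gradings are $\eta=(g_1)$, which are automatically symmetric, so $\text{Aut}(\Gamma)=\text{Stab}(\Gamma)=G_1\times H_1^\Gamma$ and $W(\Gamma)=1$; and since the universal group of a grading on $UT_2$ imposes no relation on $g_1$, the scalar $\chi(g_1)$ ranges over all of $K^\ast$, whence $\text{Diag}(\Gamma)=\{\varphi_a\mid a=\text{diag}(1,x),\,x\in K^\ast\}$. Apart from the $\omega$-step, everything above is a transcription of arguments already made for $UT_n$ and $\text{UJ}_n$; the only genuinely new bookkeeping is verifying that the semidirect-product and commutation relations inside $\text{Aut}(UT_n^{(-)})$ restrict correctly to the subgroups $G_1$ and $H_1^\Gamma$.
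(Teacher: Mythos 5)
Your proposal follows the paper's own route exactly: the paper proves this theorem by a two-sentence remark (every $\psi_s\in G_1$ is a graded automorphism, and ``previous discussion holds also for the Lie case''), deferring everything to the associative and Jordan sections, and your write-up is that argument with the details filled in. Your treatment of $G_1$, of $\varphi_a$ via equation (\ref{main_equation}) and the filtration remark before Proposition \ref{prop3}, of the diagonal group, and of the case $n=2$ is correct and is precisely what the paper intends.

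However, the one step you explicitly defer --- ``the key point is to verify that this reflection respects the partition of the matrix units into homogeneous components'' --- is exactly where the difficulty sits, and you never carry it out. It is not a formality: for the reflection $e_{ij}\mapsto e_{n-j+1,n-i+1}$ to permute the components $A_g$ one needs that $\deg e_{ij}=\deg e_{kl}$ implies $\deg e_{n-j+1,n-i+1}=\deg e_{n-l+1,n-k+1}$, and for a non-symmetric $\eta$ this can fail. For instance, take $n=3$, $G=\mathbb{Z}$ and $\eta=(1,0)$: then $e_{12}$ and $e_{13}$ both have degree $1$, while their reflections $e_{23}$ and $e_{13}$ have degrees $0$ and $1$, so $\omega(A_1)$ is not a homogeneous component and $\omega\notin\text{Aut}(\Gamma)$; by your own filtration argument no automorphism of the form $\omega\psi_s\varphi_a$ can then be a self-equivalence, so for this grading $\text{Aut}(\Gamma)=G_1\times H_1^\Gamma$ and $W(\Gamma)=1$ even though $\eta\ne\text{rev}\,\eta$. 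Thus parts (i) and (ii) need either an extra hypothesis on $\eta$ or a characterization of when the reflection is degree-compatible; as written, your argument (like the paper's) establishes the full claim only when that compatibility holds, e.g.\ when $\eta=\text{rev}\,\eta$. Everything else in the proposal is sound.
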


\section{MT gradings}

Now, we proceed to compute the graded automorphisms for the MT gradings. Note that the involution $t$ is always a graded automorphism for $\text{UJ}_n$; and $\omega$ is always a graded automorphism for $UT_n^{(-)}$, moreover $t$ and $\omega$ are in the respective diagonal subgroup. We shall investigate when an inner automorphism of type $\varphi_a$ is a graded automorphism for the MT grading on the Jordan case.

The following properties of $\varphi_a$ are straightforward and easy to obtain.
\begin{Prop}\label{propum}
	Let $a\in UT_n$ be an invertible matrix. The following conditions are equivalents:
	\begin{enumerate}
		\renewcommand{\labelenumi}{\roman{enumi})}
		\item $\varphi_a$ maps symmetric elements into symmetric elements; and it maps skew-symmetric elements into skew-symmetric elements,
		\item $\varphi_a\circ\omega=\omega\circ\varphi_a$,
		\item there exists a non-zero scalar $k\in K$ such that $a\omega(a)=\omega(a)a=-k\cdot1$.
	\end{enumerate}
\end{Prop}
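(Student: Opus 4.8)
The plan is to establish the three-way equivalence in two independent pieces: (i)$\Leftrightarrow$(ii) is a purely linear-algebraic statement about the involution $\omega$, and (ii)$\Leftrightarrow$(iii) follows by unwinding the operator identity $\varphi_a\circ\omega=\omega\circ\varphi_a$ into a relation satisfied by the matrix $a$.

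For (i)$\Leftrightarrow$(ii): since $\omega=-t$ and $t^2=1$ we have $\omega^2=\operatorname{id}$, so (using $\operatorname{char}K\ne 2$, which the MT setting presupposes) $UT_n$ decomposes as the direct sum of the $(+1)$- and $(-1)$-eigenspaces of $\omega$. From $\omega(x)=x\iff t(x)=-x$ and $\omega(x)=-x\iff t(x)=x$, the $(+1)$-eigenspace is exactly the space of skew-symmetric elements and the $(-1)$-eigenspace is the space of symmetric elements. A linear bijection $\varphi$ of $UT_n$ commutes with the diagonalizable operator $\omega$ if and only if it leaves each eigenspace of $\omega$ invariant; applied to $\varphi=\varphi_a$, this is precisely condition (i). So this equivalence is immediate, with essentially no computation.

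For (ii)$\Leftrightarrow$(iii): I would evaluate both sides of (ii) on an arbitrary $x\in UT_n$, using that $t$ is an anti-automorphism with $t(a^{-1})=t(a)^{-1}$. This gives $\omega(\varphi_a(x))=-t(a)^{-1}t(x)\,t(a)$ and $\varphi_a(\omega(x))=-a\,t(x)\,a^{-1}$. Equating these for all $x$ and replacing $t(x)$ by a free variable $y$ (which again ranges over all of $UT_n$), condition (ii) becomes $a y a^{-1}=t(a)^{-1}y\,t(a)$ for every $y\in UT_n$, that is, $t(a)\,a$ is a central element of $UT_n$. Since the center of $UT_n$ is $K\cdot 1$ and $a$ is invertible, this holds if and only if $t(a)\,a=k\cdot 1$ for some $k\in K^\ast$; multiplying this relation on the left by $a$ and on the right by $a^{-1}$ then forces $a\,t(a)=t(a)\,a=k\cdot 1$ as well. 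Substituting $t(a)=-\omega(a)$ yields exactly (iii) with the same $k$. Conversely, (iii) gives $t(a)=-\omega(a)=k\,a^{-1}$, whence $t(a)^{-1}y\,t(a)=a y a^{-1}$ for all $y$, which is (ii).

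There is no genuine obstacle here — the statement is indeed ``straightforward and easy'', as claimed. The only points that need attention are the appeal to $\operatorname{char}K\ne 2$ in the eigenspace decomposition, and the correct order-reversal when applying $t$ to products in the computation of $\omega(\varphi_a(x))$. The structural fact carrying the second half of the argument is that the center of $UT_n$ consists only of scalars, which is what upgrades the a priori one-sided conjugation identity to the two-sided relation $a\,\omega(a)=\omega(a)\,a=-k\cdot 1$.
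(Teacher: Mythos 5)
Your proof is correct and takes essentially the same route as the paper's: the equivalence (i)$\Leftrightarrow$(ii) via the $\pm1$-eigenspace decomposition of $\omega$ (which the paper dismisses as trivial), and (ii)$\Leftrightarrow$(iii) by unwinding the commutation identity on an arbitrary $x$ until it says that $t(a)a$ --- equivalently $a\omega(a)$ --- is central, hence a nonzero scalar multiple of the identity. The paper phrases the same computation in terms of $\omega$ rather than $t$ and invokes \cite{Do1994} for the fact that $\varphi_{a\omega(a)}=\varphi_1$ forces $a\omega(a)\in K^\ast\cdot 1$, but the content is identical.
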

\begin{proof}
	$(i)\iff(ii)$. Trivial.

	$(ii)\Rightarrow(iii)$. Let $x\in UT_n$. Then
		$$
			a\omega(x)a^{-1}=\omega(axa^{-1})=\omega(a^{-1})\omega(x)\omega(a).
		$$
		In particular, $(a^{-1}\omega(a^{-1}))x(\omega(a)a)=x$ for all $x\in UT_n$, hence $\varphi_{a\omega(a)}=\varphi_1$. From \cite{Do1994}, this gives $a\omega(a)=\omega(a)a=-k\cdot1$ for some $k\in K^\ast$.

	$(iii)\Rightarrow(ii)$. For all $x\in UT_n$, we have
		$$
			\varphi_a^{-1}\circ\omega\circ\varphi_a\circ\omega(x)=a^{-1}\omega(a\omega(x)a^{-1})a=(-k)x(-k^{-1})=x,
		$$
		hence $\varphi_a^{-1}\circ\omega\circ\varphi_a\circ\omega=1$, proving the implication.
\end{proof}

We take a break to remark the following analogous fact for an invertible $a\in UT_n$. The assertions bellow are equivalents:
\begin{enumerate}
	\renewcommand{\labelenumi}{\roman{enumi})}
	\item $\varphi_a$ maps symmetric elements into skew-symmetric elements; and skew-symmetric elements into symmetric elements,
	\item $\varphi_a\circ\omega=-\omega\circ\varphi_a$,
	\item $\omega(a)a=-a\omega(a)=-k\cdot1$, for some non-zero scalar $k\in K$.
\end{enumerate}

We say that a matrix $a\in UT_n$ is \textbf{$\omega$-invertible} if $a\omega(a)=\omega(a)a=-k\cdot1$, for some non-zero $k\in K$. If $\varphi_a$ is a graded automorphism for a MT-grading, then necessarily $a$ is $\omega$-invertible by previous proposition; conversely every $\varphi_a$ with an $\omega$-invertible homogeneous matrix $a$ of degree $1$ is a graded automorphism of an MT-grading. So, $\omega$-invertible matrices are a relevant class of matrices to consider. Next proposition shows how we can construct $\omega$-invertible matrices, and in particular, the proposition proves that such matrices indeed exists.
\begin{Prop}
	Choose a set of elements in the field $R=\{a_{ij}\mid 1\le i\le j<n,i+j<n\}$, with $a_{ii}\in K^\ast$, for all $i$. Fix any $k\in K^\ast$. Assume that one of the following conditions holds:
	\begin{enumerate}
		\item $n$ is even, or
		\item $n$ is odd and there exists $k_0\in K$ with $k_0^2=k$.
	\end{enumerate}
	Then there exists unique $a\in UT_n$ such that $(a)_{(i,j)}=a_{ij}$, for all $1\le i\le j<n$, $i+j<n$, and such that $a\omega(a)=\omega(a)a=-k\cdot1$.
\end{Prop}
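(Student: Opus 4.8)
The plan is to convert the identity $a\omega(a)=-k\cdot 1$ into a single matrix equation with an evident triangular shape and then to read off the non-prescribed entries of $a$ one anti-diagonal at a time. Let $J$ be the $n\times n$ exchange matrix (ones on the main anti-diagonal, zeros elsewhere), so that $t(x)=Jx^{T}J$ and hence $\omega(x)=-Jx^{T}J$ for all $x\in UT_n$. Expanding, $a\omega(a)=-k\cdot1$ is equivalent to $aJa^{T}=kJ$; since $k\ne0$ this exhibits $-k^{-1}\omega(a)$ as a right inverse of $a$, so $a$ is invertible and $\omega(a)a=-k\cdot1$ follows for free. Thus it suffices to find the unique upper triangular $a$ carrying the prescribed entries and satisfying $aJa^{T}=kJ$.

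First I would write $aJa^{T}=kJ$ in coordinates: for $p\le q$,
$$
(aJa^{T})_{pq}=\sum_{r}a_{pr}\,a_{q,\,n+1-r}=k\,\delta_{p,\,n+1-q},
$$
and since $a$ is upper triangular the sum runs only over $p\le r\le n+1-q$, hence vanishes identically unless $p+q\le n+1$. So the non-trivial relations are exactly: the \emph{diagonal relations} $a_{pp}a_{qq}=k$ coming from $p+q=n+1$ (the sum then has the single term $r=p$), and the \emph{vanishing relations} $\sum_{r=p}^{\,n+1-q}a_{pr}a_{q,\,n+1-r}=0$ for $p+q\le n$. The diagonal relations determine $a_{qq}$ for $q>(n+1)/2$ from the prescribed $a_{pp}$ with $p=n+1-q$, upon dividing by $a_{pp}\ne0$; when $n$ is odd the relation with $p=q=(n+1)/2$ reads $a_{pp}^{2}=k$, which is solvable precisely when $k$ is a square — this is exactly the alternative in the hypotheses — and a choice of a square root $k_{0}$ pins down that central entry.

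For the off-diagonal entries I would induct on the anti-diagonal index $c(i,j)=i+j$. The point is that, after moving its leading term to the left, a vanishing relation with $p<q$ reads
$$
a_{pp}\,a_{q,\,n+1-p}=-\sum_{r=p+1}^{\,n+1-q}a_{pr}\,a_{q,\,n+1-r},
$$
and a direct check shows that $a_{q,\,n+1-p}$ has anti-diagonal index $c=n+1+(q-p)$ strictly larger than that of every entry occurring on the right, while a vanishing relation with $p=q$ isolates $2a_{pp}\,a_{p,\,n+1-p}$ and expresses the anti-diagonal entry $a_{p,\,n+1-p}$ (of index $n+1$) through entries of smaller index only — here one needs the characteristic of $K$ to be $\ne2$, which is automatic in the MT setting since otherwise $X_{i:m}^{+}=X_{i:m}^{-}$. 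Assigning to each non-trivial relation the entry it determines in this way gives a bijection onto the non-prescribed entries (the two sets have equal cardinality), realized by a triangular linear system whose pivots are the nonzero quantities $a_{pp}$ and $2a_{pp}$. Processing entries in order of increasing $c$ therefore forces each one uniquely, proving existence and uniqueness of $a$ — for $n$ odd, uniqueness once $k_{0}$ has been fixed, the two solutions differing only by the sign of the central diagonal entry.

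The genuinely laborious step is this bookkeeping: specifying the correspondence $(p,q)\mapsto$(entry determined) and checking, in the separate cases $p<q$ with $p+q<n$, $p<q$ with $p+q=n$, and $p=q$, that the right-hand side never involves an entry of anti-diagonal index $\ge c$. The boundary situations warrant explicit attention — when $n$ is even the central superdiagonal entry $a_{n/2,\,n/2+1}$ is forced to be $0$, and when $n$ is odd the central diagonal entry is where hypotheses (1) and (2) enter — but the reformulation $aJa^{T}=kJ$ together with the split into diagonal and vanishing relations makes the triangularity, hence the conclusion, transparent.
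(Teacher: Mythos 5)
Your argument is correct, but it follows a genuinely different route from the paper's. The paper proceeds by induction on $n$: it applies the inductive hypothesis to the inner $(n-2)\times(n-2)$ block (which carries the $\omega$ of $UT_{n-2}$ with the same scalar $k$), and then solves the first-row equations $(a\omega(a))_{(1,j)}=-k\delta_{1j}$ for the last column, with $a_{11}$ as pivot. Your reformulation $aJa^{T}=kJ$ replaces this peeling-off of the outer frame by a single global triangular elimination ordered by the anti-diagonal index $i+j$; the key inequality you invoke (that $a_{q,n+1-p}$ has strictly larger index than every other entry occurring in the relation indexed by $(p,q)$) does check out. Your approach buys two things the paper leaves implicit: the identity $\omega(a)a=-k\cdot1$ comes for free from invertibility instead of needing a separate verification, and the roles of all the hypotheses --- the square root $k_{0}$ for $n$ odd, the forced vanishing of the central superdiagonal entry for $n$ even, and the requirement that the characteristic of $K$ be different from $2$ so that the pivots $2a_{pp}$ in the relations with $p=q$ are invertible --- become visible at once; that characteristic assumption, which you rightly flag, is genuinely needed and absent from the statement. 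Two caveats apply equally to your write-up and to the paper: the prescribed set must be $\{(i,j)\mid i\le j,\ i+j\le n\}$ rather than $i+j<n$ as written, since your bijection between relations and determined entries lands exactly on $\{(i,j)\mid i+j\ge n+1\}$, so under the strict inequality the entries on the anti-diagonal $i+j=n$ are neither prescribed nor determined and uniqueness fails (already for $n=2$, where the paper itself treats $a_{11}$ as prescribed even though it does not lie in the stated $R$); and for $n$ odd the central diagonal entry is pinned down only up to the choice of square root of $k$, so the claimed uniqueness must be read accordingly --- a point you acknowledge but the paper does not, and note that flipping that sign propagates to the other determined entries, not just to the central one.
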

\begin{proof}
	We prove by induction on $n$. The claim is true if $n=1$, under the conditions above. If $n=2$, then necessarily $a=\left(\begin{array}{cc}a_{11}&0\\0&a_{11}^{-1}k\end{array}\right)$. So, suppose $n>2$.

	Consider the set $R_0=\{a_{ij}\mid2\le i\le j<n,i+j<n\}\subset R$ and apply induction hypothesis to obtain conditions on the entries belonging to $UT_{n-2}$ of a matrix $a$, if such $a$ indeed exists. Thus, we completely determine the entries $(a)_{(i,j)}$ where $2\le i\le j\le n-1$. Now, assume $(a)_{(1,j)}=a_{1j}$, for $j=1,2,\ldots,n-1$. Assuming that $a\omega(a)=-k\cdot1$, consider the entries $(a\cdot\omega(a))_{(1,j)}$, for $j=1,2,\ldots,n$. Then there exists unique entries $(a)_{(j,n)}$, where $j=2,3,\ldots,n$, that satisfies all these equations. This proves the proposition.
\end{proof}

Now, denote
$$
	\mathscr{H}^G=\{\varphi_a\mid\text{$a\in UT_n$ is $\omega$-invertible, homogeneous and $G\text{-}\deg a=1$}\}.
$$
Note that if the field $K$ is quadratically closed (that is, $x^2=k$, for $k\in K$, always have solution in $K$), then previous definition coincides with all $\varphi_a$ such that $a\in UT_n$ is homogeneous of degree $1$ and $a^{-1}=\omega(a)$.

It is clear that $\mathscr{H}^G$ consists of all graded automorphism of type $\varphi_a$ for a fixed MT-grading. Also, by Proposition \ref{propum}, every element of $\mathscr{H}^G$ commutes with $\omega$ (and with $t$). With this in mind, we obtain
\begin{Prop}
	Let $\Gamma$ be an MT-grading on $\text{UJ}_n$. Then $\text{Stab}(\Gamma)\simeq\mathscr{H}\times\langle t\rangle$.
\end{Prop}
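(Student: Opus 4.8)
The plan is to locate $\text{Stab}(\Gamma)$ inside the ordinary automorphism group of $\text{UJ}_n$ and then recognize it as an internal direct product. First I would use the decomposition $\text{Aut}(\text{UJ}_n)\simeq\langle t\rangle\rtimes G_0$, with $G_0=\{\varphi_a\mid a\in UT_n\text{ invertible}\}$: any $\psi\in\text{Stab}(\Gamma)$ is in particular an automorphism of $\text{UJ}_n$, hence $\psi=\varphi_a$ or $\psi=t\circ\varphi_a$ for some invertible $a$. Since $t$ is itself a graded automorphism of the MT-graded $\text{UJ}_n$ and $t^{2}=1$, in the second case $\varphi_a=t\circ\psi$ lies again in $\text{Stab}(\Gamma)$. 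So in both cases $\varphi_a$ is a graded automorphism of inner type, i.e.\ $\varphi_a\in\mathscr{H}^G$, whence $\psi\in\langle t\rangle\cdot\mathscr{H}^G$. As $\langle t\rangle\subset\text{Stab}(\Gamma)$ and $\mathscr{H}^G\subset\text{Stab}(\Gamma)$, the reverse inclusion is clear, giving $\text{Stab}(\Gamma)=\langle t\rangle\cdot\mathscr{H}^G$ (here $\mathscr{H}=\mathscr{H}^G$).

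It remains to check the three conditions for an internal direct product. Subgroup: $\mathscr{H}^G=\text{Stab}(\Gamma)\cap G_0$ is the intersection of two subgroups of $\text{Aut}(\text{UJ}_n)$ (alternatively, $\varphi_a\varphi_b^{-1}=\varphi_{ab^{-1}}$, and $ab^{-1}$ is again homogeneous of degree $1$ and $\omega$-invertible, using $t(a)=\pm k a^{-1}$). Centralizing: by Proposition \ref{propum}, every $\varphi_a\in\mathscr{H}^G$ satisfies $\varphi_a\circ\omega=\omega\circ\varphi_a$, and since $\omega=-t$ and scalars commute with everything, this is exactly $\varphi_a\circ t=t\circ\varphi_a$; hence $\langle t\rangle$ centralizes $\mathscr{H}^G$. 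Trivial intersection: the only possibly nontrivial element of $\langle t\rangle\cap\mathscr{H}^G$ is $t$, and $t$ is not of the form $\varphi_a$ because $t$ is a genuine anti-automorphism of the associative algebra $UT_n$ (for instance $t(e_{11}e_{12})=e_{n-1,n}$ while $t(e_{11})t(e_{12})=e_{nn}e_{n-1,n}=0$), whereas each $\varphi_a$ is an associative automorphism. With a product of two subgroups, one centralizing the other and with trivial intersection, the multiplication map $\mathscr{H}^G\times\langle t\rangle\to\text{Stab}(\Gamma)$ is an isomorphism of groups, which is the claim.

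The only step needing a little care — the mild obstacle here — is the verification that $t\notin\mathscr{H}^G$: one must exclude $t=\varphi_a$ for every invertible $a$, which is precisely where one uses that $t$ reverses the associative product and is nontrivial. Everything else is bookkeeping with facts already established, namely the ordinary-automorphism decomposition of $\text{UJ}_n$, that $t$ is a graded automorphism in the MT case, that $\mathscr{H}^G$ exhausts the graded automorphisms of inner type, and that each element of $\mathscr{H}^G$ commutes with $\omega$.
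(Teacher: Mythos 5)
Your proof is correct and follows essentially the same route as the paper, which (implicitly) combines the decomposition $\text{Aut}(\text{UJ}_n)\simeq\langle t\rangle\rtimes G_0$, the fact that $\mathscr{H}^G$ exhausts the graded inner automorphisms, and Proposition \ref{propum} to see that $t$ centralizes $\mathscr{H}^G$. You merely make explicit the internal-direct-product verification (in particular the trivial intersection $\langle t\rangle\cap\mathscr{H}^G=1$, since $t$ reverses the associative product), which the paper leaves unstated.
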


Now, we proceed to make a remark concerning the Weyl group. Reading the classification of all MT-gradings on $\text{UJ}_n$ (see \cite{pkfy2}), we see that the grading is completely defined by the degrees of $X_{i:1}^\pm$. Also, from the discussion before Proposition \ref{prop3}, every equivalence of $\text{UJ}_n$ can send $X_{i:1}^+$ to $X_{i:1}^+$ or to $X_{i:1}^-$, modulo $J^2$. Also, indeed exists an inner automorphism sending $X_{i:1}^+$ to $X_{i:1}^-$ and $X_{i:1}^-$ to $X_{i:1}^+$. So
\begin{Prop}
	Let $\text{UJ}_n$ be endowed with an MT-grading $\Gamma$ and let $p=\lfloor\frac{n}2\rfloor$. Then $W(\Gamma)\simeq\mathbb{Z}_2^p$.
\end{Prop}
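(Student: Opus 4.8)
The plan is to produce an explicit surjective homomorphism $\varepsilon\colon\text{Aut}(\Gamma)\to\mathbb{Z}_2^{\,p}$ whose kernel is precisely $\text{Stab}(\Gamma)$. Since $\text{Stab}(\Gamma)$ is the kernel of the canonical map $\text{Aut}(\Gamma)\to\text{Sym}(\text{Supp}\,\Gamma)$, this immediately gives $W(\Gamma)=\text{Aut}(\Gamma)/\text{Stab}(\Gamma)\simeq\mathbb{Z}_2^{\,p}$.

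To define $\varepsilon$: let $f\in\text{Aut}(\Gamma)$. Being in particular an automorphism of $\text{UJ}_n$, $f$ is $\varphi_a$ or $\varphi_a\circ t$ for some invertible $a\in UT_n$. Applying the filtration fact recorded just before Proposition~\ref{prop3} to the two matrix units occurring in $X_{i:1}^{\pm}=e_{i,i+1}\pm e_{n-i,n-i+1}$, one obtains $f(X_{i:1}^{+})\equiv\lambda e_{i,i+1}+\mu e_{n-i,n-i+1}\pmod{J^{2}}$ for scalars $\lambda,\mu$ (the two positions being interchanged when $f$ involves $t$, which does not affect what follows). Because $f$ permutes the homogeneous components, and in an MT grading the only homogeneous lines of $\langle e_{i,i+1}\rangle+\langle e_{n-i,n-i+1}\rangle$ modulo $J^{2}$ are the ones spanned by $X_{i:1}^{+}$ and by $X_{i:1}^{-}$ (these carrying distinct degrees), we must have $\lambda=\pm\mu$, so that $f(X_{i:1}^{+})\equiv(\text{scalar})\cdot X_{i:1}^{\varepsilon_i(f)}\pmod{J^{2}}$ for a well-defined sign $\varepsilon_i(f)\in\{+,-\}$, $i=1,\dots,p$. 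Since $f(J^{2})=J^{2}$ and $f$ is bijective, $f$ induces a bijection of the two lines above, fixing both of them or interchanging them; a routine check (via $f(X_{i:1}^{-})$ in the two cases) then gives that $\varepsilon=(\varepsilon_1,\dots,\varepsilon_p)\colon\text{Aut}(\Gamma)\to\mathbb{Z}_2^{\,p}$ is a homomorphism of groups.

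It remains to show $\ker\varepsilon=\text{Stab}(\Gamma)$ and that $\varepsilon$ is onto. One inclusion is clear: a graded automorphism preserves degrees and $\deg X_{i:1}^{+}\ne\deg X_{i:1}^{-}$, so it cannot interchange the two lines, hence $\varepsilon(f)=1$. Conversely, suppose $\varepsilon(f)=1$; then $f$ preserves the degree of every $X_{i:1}^{+}$, and hence of every $X_{i:1}^{-}$. Invoking the classification of MT gradings on $\text{UJ}_n$ in \cite{pkfy2} (which reconstructs the entire grading from the degrees of the first superdiagonal elements $X_{i:1}^{\pm}$), and propagating this through the Jordan products that express the deeper elements $X_{i:m}^{\pm}$, as well as the diagonal idempotents modulo the radical, in terms of the $X_{i:1}^{\pm}$, one concludes that $f$ preserves every homogeneous component, i.e.\ $f\in\text{Stab}(\Gamma)$. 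For surjectivity, for each $i$ let $a^{(i)}=\text{diag}(1,\dots,1,-1,\dots,-1)$ with the sign change occurring after the $(n-i)$-th entry; then $\varphi_{a^{(i)}}$ fixes $e_{k,k+1}$ for $k\ne n-i$ and sends $e_{n-i,n-i+1}$ to $-e_{n-i,n-i+1}$, hence interchanges $X_{i:1}^{+}$ and $X_{i:1}^{-}$ and fixes every $X_{j:1}^{\pm}$ with $j\ne i$ modulo $J^{2}$. As $a^{(i)}$ is diagonal it is homogeneous of degree $1$, and the same classification shows $\varphi_{a^{(i)}}\in\text{Aut}(\Gamma)$; then $\varepsilon(\varphi_{a^{(i)}})$ is the $i$-th standard generator of $\mathbb{Z}_2^{\,p}$. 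These generate $\mathbb{Z}_2^{\,p}$, so $\varepsilon$ is surjective, completing the argument.

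The main obstacle is the equality $\ker\varepsilon=\text{Stab}(\Gamma)$, together with the verification that the $\varphi_{a^{(i)}}$ are indeed self-equivalences of $\Gamma$. Both rest on a single structural point drawn from \cite{pkfy2}: the homogeneous degree of every $X_{i:m}^{\pm}$ is forced by the degrees of the $X_{j:1}^{\pm}$ through the Jordan multiplication, so that an equivalence which behaves correctly on the first superdiagonal automatically behaves correctly everywhere. This propagation is elementary to carry out once set up, but it requires care about which index interchanges a given automorphism induces, and about the scalars that get absorbed modulo $J^{m+1}$ at each step of the induction on $m$.
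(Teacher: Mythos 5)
Your strategy is the same as the paper's: read off, modulo $J^2$, a sign vector recording whether an equivalence fixes or swaps each pair $X_{i:1}^{+},X_{i:1}^{-}$, show that graded automorphisms are exactly the equivalences with trivial sign vector (using that the classification in \cite{pkfy2} recovers the whole grading from the degrees on the first superdiagonal), and realize all sign vectors by explicit diagonal inner automorphisms. The paper does this with one family $\varphi_A$, $A=\text{diag}(\epsilon,\ldots,\epsilon,\epsilon_1\cdots\epsilon_{p-1},\ldots,\epsilon_1,1)$, while you use generators $a^{(i)}$ one at a time and package the count as a homomorphism $\varepsilon$ with kernel $\text{Stab}(\Gamma)$; that is only a difference of presentation.

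There are, however, two concrete soft spots. First, when $n=2p$ is even one has $e_{p:1}=e_{p,p+1}=e_{-p:1}$, so $X_{p:1}^{-}=0$: the space $\langle e_{p,p+1}\rangle+\langle e_{n-p,n-p+1}\rangle$ is one-dimensional, the sign $\varepsilon_p(f)$ is not defined by your recipe, and $\varphi_{a^{(p)}}$ merely multiplies $X_{p:1}^{+}$ by $-1$ instead of interchanging two homogeneous lines. Your surjectivity argument therefore only produces $\mathbb{Z}_2^{p-1}$ in the image for even $n$ unless you exhibit some other homogeneous component that $\varphi_{a^{(p)}}$ actually moves; as written the $p$-th generator could land in $\text{Stab}(\Gamma)$. (The paper's proof silently has the same degenerate case.) Second, the assertion that each $\varphi_{a^{(i)}}$ lies in $\text{Aut}(\Gamma)$ needs real verification: a homogeneous component of an MT grading may contain several $X_{j:m}^{\sigma}$ with different $(j,m)$, so you must check that the sign pattern $\varphi_{a^{(i)}}$ induces on \emph{all} pairs $X_{j:m}^{\pm}$ (not just $m=1$) is constant on each component --- this is exactly the ``propagation'' you defer to \cite{pkfy2}, and it is where the content lies. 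Relatedly, your justification ``as $a^{(i)}$ is diagonal it is homogeneous of degree $1$'' is false in an MT grading (a diagonal matrix is homogeneous of degree $1$ there only if its diagonal is palindromic, and $a^{(i)}$ is not); fortunately you do not want it to be, since $a^{(i)}$ homogeneous of degree $1$ would force $\varphi_{a^{(i)}}\in\text{Stab}(\Gamma)$ and kill surjectivity. Membership in $\text{Aut}(\Gamma)$, not homogeneity of $a^{(i)}$, is what must be proved.
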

\begin{proof}
	Let $p=\left\lfloor\frac{n}2\right\rfloor$. Choose $\epsilon_1,\epsilon_2,\ldots,\epsilon_p\in\{1,-1\}$ and let $\epsilon=\epsilon_1\epsilon_2\cdots\epsilon_p$. Consider the matrix
	$$
		A=\text{diag}(\epsilon,\epsilon,\ldots,\epsilon,\epsilon_1\epsilon_2\cdots\epsilon_{p-1},\ldots,\epsilon_1\epsilon_2,\epsilon_1,1).
	$$
	The map $\varphi_A$ sends $X_{i:1}^+$ to $X_{i:1}^+$ (and $X_{i:1}^-$ to $X_{i:1}^-$) if $\epsilon_i=1$; and sends $X_{i:1}^+$ to $X_{i:1}^-$ (and $X_{i:1}^-$ to $X_{i:1}^+$) if $\epsilon_i=-1$, so $\varphi_A$ gives rise to a weak-isomorphism of $\text{UJ}_n$ (in the sense given in \cite{EldKoc}). By previous discussion, it is not possible to have more weak-isomorphisms. This proves the proposition.
\end{proof}

Finally, we will compute the Diagonal group of an MT-grading. For this purposes, we know that there exists a distinguished element $t\in G$ of order 2 such that $\deg X_{1:0}^-=t$. The group homomorphism $G\to G/\langle t\rangle$ induces a new grading on $\text{UJ}_n$; this grading is elementary and symmetric, say defined by a sequence $a=(a_1,a_2,\ldots,a_n)\in G^n$. Let $U(a)$ be the Universal group grading of such elementary grading $(\text{UJ}_n,a)$. Every $\varphi_{a_\chi}$ is an element of $\text{Diag}(\Gamma)$, where $\chi\in\widehat{U(a)}$ and $a_\chi=\text{diag}(\chi(a_1),\ldots,\chi(a_n))$; we observe that, being the elementary grading symmetric, $a_\chi$ will indeed be a homogeneous matrix of degree 1 in the original MT-grading. We have already discussed that $t$ is an element of the diagonal group, and the conclusion is clear
\begin{Prop}
	Let $\Gamma$ denote an MT-grading on $\text{UJ}_n$ and let the associated induced elementary grading be defined by $a=(a_1,a_2,\ldots,a_n)\in G^n$ and let $U(a)$ be the respective Universal group grading. Then
	$$
		\text{Diag}(\Gamma)\simeq\langle t\rangle\times\{\varphi_{a_\chi}\mid\chi\in\widehat{U(a)},a_\chi=(\chi(a_1),\ldots,\chi(a_n))\}.
	$$
\end{Prop}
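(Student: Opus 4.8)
The plan is to combine the splitting $\text{Stab}(\Gamma)\simeq\mathscr{H}^G\times\langle t\rangle$ (obtained in the proposition just proved) with an explicit analysis of which inner automorphisms $\varphi_a$ are diagonal, and then to match the latter with the characters of $U(a)$. Since $t\in\text{Diag}(\Gamma)$ and $\text{Stab}(\Gamma)=\mathscr{H}^G\times\langle t\rangle$ is an internal direct product, we have $\text{Diag}(\Gamma)=\bigl(\text{Diag}(\Gamma)\cap\mathscr{H}^G\bigr)\times\langle t\rangle$, because a subgroup $D$ of a direct product $A\times B$ with $B\subseteq D$ satisfies $D=(D\cap A)\times B$ (given $(x,y)\in D$ one has $(x,1)=(x,y)(1,y)^{-1}\in D$). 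The discussion preceding the statement already shows $\varphi_{a_\chi}\in\text{Diag}(\Gamma)$ for every $\chi\in\widehat{U(a)}$; moreover each $a_\chi$ is diagonal, and reversing its entries and using that the elementary grading is symmetric (so that $a_ia_{n+1-i}$ is independent of $i$) one gets $a_\chi\omega(a_\chi)=\omega(a_\chi)a_\chi=-\chi(a_1a_n)\cdot 1$, a nonzero scalar matrix; hence $a_\chi$ is $\omega$-invertible and $\varphi_{a_\chi}\in\mathscr{H}^G$. Thus $\{\varphi_{a_\chi}\}\subseteq\text{Diag}(\Gamma)\cap\mathscr{H}^G$, and only the reverse inclusion remains.

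So let $\varphi_a\in\text{Diag}(\Gamma)\cap\mathscr{H}^G$; I first show $a$ is diagonal. For each $i$ the elements $X_{i:0}^{+}=e_{ii}+e_{n-i+1,n-i+1}$ and $X_{i:0}^{-}=e_{ii}-e_{n-i+1,n-i+1}$ (the latter possibly zero) are homogeneous, so $\varphi_a(e_{ii})=\frac12\bigl(\varphi_a(X_{i:0}^{+})+\varphi_a(X_{i:0}^{-})\bigr)$ is a scalar combination of $e_{ii}$ and $e_{n-i+1,n-i+1}$, hence supported on main-diagonal positions. Comparing with \eqref{main_equation}, in $\varphi_a(e_{ii})=\sum_{l\le i\le k}a_{li}(a^{-1})_{ik}e_{lk}$ the coefficient of $e_{li}$ with $l<i$ (not a main-diagonal position) must vanish; this coefficient is $a_{li}(a^{-1})_{ii}$ and $(a^{-1})_{ii}\ne 0$, so $a_{li}=0$ for $l<i$, and then $a_{li}=0$ for all $l\ne i$ since $a$ is upper triangular. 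Thus $a=\text{diag}(c_1,\dots,c_n)$, and replacing $a$ by $c_1^{-1}a$ we may assume $c_1=1$.

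Now $\varphi_a(e_{ij})=c_ic_j^{-1}e_{ij}$. Applying $\varphi_a$ to $X_{i:j-i}^{+}$ and, when $i+j\ne n+1$, to $X_{i:j-i}^{-}$, and using that $\varphi_a$ acts by the single scalar $\lambda_g$ on each $A_g$, one obtains $c_ic_j^{-1}=\lambda_{\deg X_{i:j-i}^{+}}=c_{n-j+1}c_{n-i+1}^{-1}$ and, for $i+j\ne n+1$, also $c_ic_j^{-1}=\lambda_{\deg X_{i:j-i}^{-}}$, where $\deg X_{i:j-i}^{-}=t\cdot\deg X_{i:j-i}^{+}$ (this holds because, the induced elementary grading being symmetric, $X_{i:j-i}^{+}$ and $X_{i:j-i}^{-}$ have the same image in $G/\langle t\rangle$ but distinct $G$-degrees). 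If $e_{ij}$ and $e_{pq}$ have equal degree in $(\text{UJ}_n,a)$, then $\deg X_{i:j-i}^{+}$ and $\deg X_{p:q-p}^{+}$ coincide modulo $\langle t\rangle$, and a short case analysis on these two degrees — invoking the classification of MT gradings on $\text{UJ}_n$ \cite{pkfy2} to control which $G$-component each $X_{i:m}^{+}$ lies in — shows that in all cases $c_ic_j^{-1}=c_pc_q^{-1}$. Hence $a_ia_j^{-1}\mapsto c_ic_j^{-1}$ is a well-defined map on $\text{Supp}(\text{UJ}_n,a)$ which respects the defining relations of $U(a)$ (as $c_ic_k^{-1}=(c_ic_j^{-1})(c_jc_k^{-1})$), so it extends to a character $\chi\in\widehat{U(a)}$; since $c_1=1$ this gives $c_i=\chi(a_i)$, i.e. $a=a_\chi$ and $\varphi_a=\varphi_{a_\chi}$.

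Finally, $\langle t\rangle$ and $\{\varphi_{a_\chi}\}$ commute, since each $a_\chi$ is $\omega$-invertible and hence $\varphi_{a_\chi}$ commutes with $\omega=-t$ by Proposition \ref{propum}; and they intersect trivially because $\mathscr{H}^G\cap\langle t\rangle=1$ inside the direct product $\text{Stab}(\Gamma)$. Together with the two inclusions of the first paragraph this yields $\text{Diag}(\Gamma)\simeq\langle t\rangle\times\{\varphi_{a_\chi}\mid\chi\in\widehat{U(a)}\}$, as claimed. The main obstacle is the case analysis in the third paragraph: determining, via the structure theory of MT gradings, exactly which homogeneous component of $\Gamma$ each symmetric element $X_{i:m}^{+}$ belongs to relative to the induced elementary grading (the point being that two $X_{i:m}^{+}$ of equal elementary degree need not have equal $G$-degree, only equal $G$-degree modulo $\langle t\rangle$).
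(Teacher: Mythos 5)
Your proof is correct and goes the same way the paper does; in fact the paper's entire ``proof'' of this proposition is the discussion preceding the statement (the construction of the induced elementary grading and the inclusions $\{\varphi_{a_\chi}\}\cup\{t\}\subseteq\text{Diag}(\Gamma)$) followed by ``the conclusion is clear,'' so the reverse inclusion you carry out --- reducing to $\text{Diag}(\Gamma)\cap\mathscr{H}^G$ via the splitting of $\text{Stab}(\Gamma)$, forcing $a$ to be diagonal from equation (\ref{main_equation}) applied to $X_{i:0}^{\pm}$, and reading off a character of $U(a)$ --- is precisely the part the paper leaves implicit. The only step to tighten is the one you defer to a ``case analysis via the classification'': it needs no classification. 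For diagonal $a=\text{diag}(c_1,\ldots,c_n)$, the element $e_{pq}$ occurs with non-zero coefficient in both $X_{p:q-p}^{+}$ and $X_{p:q-p}^{-}$ (when the latter is non-zero), so the scalar by which the diagonal automorphism $\varphi_a$ acts on \emph{either} of the two homogeneous components containing these elements is the same number $c_pc_q^{-1}$. Since the elementary degree of $e_{ij}$ is by construction the image of $\deg_G X_{i:j-i}^{+}$ in $G/\langle t\rangle$, the equality $a_ia_j^{-1}=a_pa_q^{-1}$ means $\deg_G X_{i:j-i}^{+}\in\{\deg_G X_{p:q-p}^{+},\,\deg_G X_{p:q-p}^{-}\}$; in either case $X_{i:j-i}^{+}$ lies in a component whose scalar is $c_pc_q^{-1}$, and that scalar is also $c_ic_j^{-1}$, so the two agree. (If one of the $X^{-}$'s vanishes, i.e.\ the position lies on the second diagonal, run the same comparison with the surviving elements.) With that observation your character $\chi$ is well defined and the argument is complete; the remaining points --- the $\omega$-invertibility of $a_\chi$ from the symmetry of the sequence $a$, and the direct-product bookkeeping --- all check out.
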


\noindent We summarize all the computations of this section for the Jordan case:

\begin{Theo}
	Consider $\text{UJ}_n$ endowed with a MT-grading $\Gamma$, let the induced (symmetric) elementary grading be given by $a=(a_1,\ldots,a_n)\in G^n$ and $U(a)$ be the respective Universal group grading. Then
	\begin{eqnarray*}
		\text{Stab}(\Gamma)&\simeq&\mathscr{H}\times\langle t\rangle,\\
		W(\Gamma)&\simeq&\mathbb{Z}_2^p,\quad p=\left\lfloor\frac{n}2\right\rfloor,\\
		\text{Diag}(\Gamma)&=&\langle t\rangle\times\{\varphi_{a_\chi}\mid\chi\in\widehat{U(a)},a_\chi=\text{diag}(\chi(a_1),\ldots,\chi(a_n))\}.
	\end{eqnarray*}
\end{Theo}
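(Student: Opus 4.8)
The plan is to assemble the statement from the three propositions of this section, checking only that the pieces fit together as \emph{internal direct products}. For the first line, recall from the introduction that $\text{Aut}(\text{UJ}_n)\simeq\langle t\rangle\rtimes G_0$, so every automorphism of $\text{UJ}_n$ is $\varphi_a$ or $\varphi_a\circ t$ for some invertible $a\in UT_n$. Since $\Gamma$ is MT the involution $t$ is graded (in fact $t\in\text{Diag}(\Gamma)$), so $\varphi_a\circ t$ is graded exactly when $\varphi_a$ is, and by Proposition \ref{propum} together with the notion of $\omega$-invertibility this happens precisely when $\varphi_a\in\mathscr{H}$. Hence $\text{Stab}(\Gamma)=\mathscr{H}\cup\mathscr{H}t$; because $t$ commutes with every element of $\mathscr{H}$ (Proposition \ref{propum} again), $t^2=1$, and $t$ is not inner, this decomposition is the internal direct product $\mathscr{H}\times\langle t\rangle$. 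This is exactly the content of the Proposition computing $\text{Stab}(\Gamma)$.

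For the second line I would invoke the Proposition computing $W(\Gamma)$: using the classification of MT-gradings of $\text{UJ}_n$ from \cite{pkfy2}, a self-equivalence is determined, modulo $\text{Stab}(\Gamma)$, by whether it fixes or swaps each pair $\{X_{i:1}^+,X_{i:1}^-\}$ modulo $J^2$ --- there are $p=\lfloor n/2\rfloor$ such pairs --- and each of the $2^p$ possibilities is realised by the explicit diagonal $\varphi_A$ exhibited there. Therefore $W(\Gamma)=\text{Aut}(\Gamma)/\text{Stab}(\Gamma)\simeq\mathbb{Z}_2^p$.

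For the third line I would quote the Proposition describing $\text{Diag}(\Gamma)$: the quotient map $G\to G/\langle t\rangle$ turns $\Gamma$ into a symmetric elementary grading $(\text{UJ}_n,a)$ with $a=(a_1,\dots,a_n)$; by the elementary (associative/Jordan) analysis every diagonal graded automorphism of that elementary grading arises as $\varphi_{a_\chi}$ with $\chi\in\widehat{U(a)}$ and $a_\chi=\text{diag}(\chi(a_1),\dots,\chi(a_n))$; symmetry of the elementary grading guarantees each such $a_\chi$ is homogeneous of degree $1$ in the original MT-grading; and $t$ itself lies in $\text{Diag}(\Gamma)$. The family $\{\varphi_{a_\chi}\}$ meets $\langle t\rangle$ trivially, so $\text{Diag}(\Gamma)=\langle t\rangle\times\{\varphi_{a_\chi}\mid\chi\in\widehat{U(a)}\}$. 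Placing the three identifications side by side yields the displayed isomorphisms.

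Since every ingredient has already been established, there is no genuine obstacle; the only point needing care is the bookkeeping that upgrades the set-theoretic decompositions to \emph{internal direct products}, i.e. verifying the commutation of $t$ with $\mathscr{H}$ and with the $\varphi_{a_\chi}$, and checking that $t$ is neither inner nor of the form $\varphi_{a_\chi}$, so that $\langle t\rangle$ splits off cleanly in both $\text{Stab}(\Gamma)$ and $\text{Diag}(\Gamma)$.
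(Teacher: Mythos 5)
Your proposal is correct and follows the paper's own route: the theorem is stated there as a summary of the section's three propositions (for $\text{Stab}$, $W$, and $\text{Diag}$ of an MT-grading on $\text{UJ}_n$), and you assemble it from exactly those ingredients, adding only the routine verification that $t$ commutes with $\mathscr{H}$ and is not inner, so that the decompositions are genuine direct products.
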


We write one last remark. Take now a MT-grading on the Lie case $UT_n^{(-)}$. Let $G_S=\{\psi_s\mid s=\text{rev}\,s\}$. We can check directly that a map of type $\psi_s\in G_1$ is a graded automorphism for $UT_n^{(-)}$ if and only if $\psi_s\in G_S$. Moreover, note that every element of $G_S$ commutes with $\omega$. Also, the facts concerning the Weyl group and the diagonal group are analogous in the Jordan case. Hence, we obtain
\begin{Theo}
	Consider a MT-grading $\Gamma$ on the Lie algebra $UT_n^{(-)}$. Then
		\begin{eqnarray*}
			\text{Stab}(\Gamma)&\simeq& G_S\times\mathscr{H}^G\times\langle\omega\rangle,\\
			W(\Gamma)&\simeq&\mathbb{Z}_2^p,\quad p=\left\lfloor\frac{n}2\right\rfloor,\\
			\text{Diag}(\Gamma)&=&\langle\omega\rangle\times\{\varphi_{a_\chi}\mid\chi\in\widehat{U(a)},a_\chi=\text{diag}(\chi(a_1),\ldots,\chi(a_n))\},
		\end{eqnarray*}
	where the induced symmetric elementary grading is given by the sequence $a=(a_1,\ldots,a_n)\in G^n$, and $U(a)$ is the respective Universal group grading.
\end{Theo}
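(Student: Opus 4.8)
The argument runs parallel to the Jordan MT case, the only new ingredient being the larger ambient automorphism group $\text{Aut}(UT_n^{(-)})\simeq\langle\omega\rangle\rtimes(G_0\times G_1)$; we may assume $n\ge 3$. Thus every $\psi\in\text{Stab}(\Gamma)$ is uniquely of the form $\psi=\omega^{\epsilon}\circ\varphi_a\circ\psi_s$, with $\epsilon\in\{0,1\}$, $\varphi_a\in G_0$, $\psi_s\in G_1$. Since $t$ sends $X_{i:m}^+\mapsto X_{i:m}^+$ and $X_{i:m}^-\mapsto-X_{i:m}^-$, the automorphism $\omega=-t$ multiplies each $X_{i:m}^\pm$ by $\mp 1$ and therefore fixes all homogeneous degrees; hence $\omega\in\text{Stab}(\Gamma)\cap\text{Diag}(\Gamma)$, and it remains to decide when $\varphi_a\circ\psi_s$ is graded.

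The decisive point is that a graded automorphism of an MT-grading respects the decomposition of $UT_n^{(-)}$ into its $t$-symmetric and $t$-skew-symmetric parts: for the Lie bracket, the bracket of two symmetric or of two skew-symmetric matrices is skew-symmetric, while that of a symmetric and a skew-symmetric one is symmetric, so this decomposition is a coarsening of $\Gamma$. Granting this, I would argue as follows. The map $\psi_s$ is the identity on the radical $J$ and sends $e_{ii}\mapsto e_{ii}+a_i\cdot 1$; since the identity matrix $1$ is symmetric while $X_{i:0}^-$ is skew-symmetric, these cannot share a homogeneous degree, so the relation $\psi_s(X_{i:0}^-)=X_{i:0}^-+(a_i-a_{n-i+1})\cdot 1$ shows that $\varphi_a\circ\psi_s$ graded forces $a_i=a_{n-i+1}$ for all $i$, i.e.\ $\psi_s\in G_S$ (this is the equivalence stated in the remark). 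Then $\varphi_a=(\varphi_a\circ\psi_s)\circ\psi_s^{-1}$ is graded, $a$ is homogeneous of $G$-degree $1$, and $\varphi_a$ preserves the symmetric/skew-symmetric decomposition, so by Proposition~\ref{propum} the matrix $a$ is $\omega$-invertible, that is $\varphi_a\in\mathscr{H}^G$. Conversely $G_S$, $\mathscr{H}^G$ and $\langle\omega\rangle$ all lie in $\text{Stab}(\Gamma)$, they commute elementwise ($\omega$ with $\mathscr{H}^G$ by Proposition~\ref{propum}, $\omega$ with $G_S$ as remarked, and $\mathscr{H}^G\subset G_0$ with $G_S\subset G_1$ by \cite{Do1994}), and they intersect pairwise trivially by uniqueness of the decomposition; hence $\text{Stab}(\Gamma)\simeq G_S\times\mathscr{H}^G\times\langle\omega\rangle$.

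For the Weyl group I would mimic the Jordan MT argument: a self-equivalence is an ordinary automorphism of $UT_n^{(-)}$ permuting the homogeneous components, and by the filtration discussion before Proposition~\ref{prop3} it sends each $X_{i:1}^+$ into $\langle X_{i:1}^+\rangle+J^2$ or into $\langle X_{i:1}^-\rangle+J^2$ without altering the index $i$; as an MT-grading is determined by the degrees of the $X_{i:1}^\pm$, the induced permutation of $\text{Supp}\,\Gamma$ is nothing but an independent choice, for each relevant index $i$, of whether to swap $X_{i:1}^+$ with $X_{i:1}^-$, each such choice being realised by $\varphi_A$ for an explicit diagonal matrix $A$ (while a $\psi_s$ with $s\ne\text{rev}\,s$ is not a self-equivalence, by the computation above). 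This gives $W(\Gamma)\simeq\mathbb{Z}_2^p$. Finally, $\text{Diag}(\Gamma)\subset\text{Stab}(\Gamma)$ and $\omega\in\text{Diag}(\Gamma)$, whereas a nontrivial element of $G_S$ moves $X_{i:0}^+$ by a nonzero multiple of $1$, which lies in the same homogeneous component, and so is not diagonal; hence $\text{Diag}(\Gamma)=\langle\omega\rangle\times(\text{Diag}(\Gamma)\cap\mathscr{H}^G)$. The combinatorial restriction of the associative case forces the matrix of an element of $\text{Diag}(\Gamma)\cap\mathscr{H}^G$ to be diagonal; passing to the induced symmetric elementary grading $a=(a_1,\ldots,a_n)\in G^n$ identifies these with the maps $\varphi_{a_\chi}$, $\chi\in\widehat{U(a)}$, $a_\chi=\text{diag}(\chi(a_1),\ldots,\chi(a_n))$, and the symmetry of that grading (which makes $a_ia_{n-i+1}$ constant in $U(a)$) makes each $a_\chi$ automatically $\omega$-invertible and homogeneous of degree $1$ in $\Gamma$; conversely each such $\varphi_{a_\chi}$ belongs to $\text{Diag}(\Gamma)$. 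This yields the asserted description. I expect the main obstacle to be the second paragraph: showing that a graded automorphism of an MT-grading splits as a product of a $\psi_s\in G_S$, a $\varphi_a\in\mathscr{H}^G$ and a power of $\omega$, which rests on the (somewhat delicate) facts that an MT-grading refines the symmetric/skew-symmetric decomposition and that the identity matrix does not lie in any homogeneous component carrying a skew-symmetric $X_{i:0}^-$.
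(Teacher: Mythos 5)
Your proposal is correct and follows essentially the same route as the paper: the paper's "proof" is just the remark preceding the theorem, which asserts exactly your three points --- that $\psi_s$ is graded if and only if $s=\text{rev}\,s$, that $G_S$ commutes with $\omega$, and that the $\varphi_a$-part, the Weyl group and the diagonal group are handled as in the Jordan MT case via Proposition~\ref{propum} and $\omega$-invertibility. You actually supply more detail than the paper does (e.g.\ the computation $\psi_s(X_{i:0}^-)=X_{i:0}^-+(a_i-a_{n-i+1})\cdot 1$ and the explicit reduction through $\text{Aut}(UT_n^{(-)})\simeq\langle\omega\rangle\rtimes(G_0\times G_1)$).
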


\small{\setstretch{0.9}

}
\end{document}